\numberwithin{equation}{section}
\newenvironment{pf}[1][Proof.]{\noindent \textbf{#1:} }{}
\newenvironment{enui}{\begin{enumerate}[(i)]}{\end{enumerate}}
\newtheorem {lemma}[equation]       {Lemma}
\newtheorem{prop}[equation]{Proposition}
\newtheorem{thm}[equation]{Theorem}
\newtheorem* {Theorem*}     {Theorem}
\newtheorem{cor}[equation]{Corollary}
\numberwithin{equation}{section}
\newtheorem{claim}[equation]{Claim}
\newtheorem*{Lemma*}{Lemma}
\theoremstyle{definition}
\theoremstyle{remark}
\newtheorem{rmk}[equation]{Remark}
\newtheorem{rmks}[equation]{Remarks}
\newtheorem* {Remark*}      {Remark}
\newtheorem*{Rmk}{Remark}
\newtheorem*{Rmks}{Remarks}
\newtheorem {Annoying Remark}[equation]      {Annoying Remark}
\newtheorem* {Example*}          {Example}
\newcommand\Lie{\operatorname{Lie}}
\def \g {{\mathfrak g}}
\def \h {{\mathfrak h}}
\def \R {{\mathbb R}}
\DeclareMathOperator \Ad {Ad}
\def\x{\times}
\def\om{\omega}
\def\sub{\subseteq}
\def\pr{\operatorname{pr}}
\newcommand\wt[1]{{\widetilde{#1}}}
\newcommand\iso{\cong}
\newcommand\reff[1]{(\ref{#1})}
\newcommand\im{{\operatorname{im}}}
\newcommand\nn{{\nonumber}}
\newcommand\then{\Rightarrow}
\newcommand\follows{\Leftarrow}
\newcommand\Wlog{w.l.o.g.~}
\newcommand{\BAR}[1]{{\overline{#1}}}
\renewcommand\phi{\varphi}
\newcommand\wrt{w.r.t.~}
\newcommand\si{\sigma}
\newcommand\SympRep[1]{\operatorname{SympRep}_{\leq#1}}
\newcommand\SympRepProp[1]{\operatorname{SympRep}^{\operatorname{prop}}_{\leq#1}}
\newcommand\symprep[1]{\operatorname{SympRep}_{#1}}
\newcommand\SYMPREP[1]{\wt{\operatorname{SympRep}}_{#1}}
\newcommand\symprepprop[1]{\operatorname{SympRep}^{\operatorname{prop}}_{#1}}
\newcommand\HamEx[1]{\operatorname{Ham}^{\operatorname{ex}}_{#1}}
\newcommand\HamExProp[1]{\operatorname{Ham}^{\operatorname{ex,prop}}_{#1}}
\newcommand\HamCrit[1]{\operatorname{Ham}^{\operatorname{crit}}_{#1}}
\newcommand\HamContr[1]{\operatorname{Ham}^{\operatorname{contr}}_{#1}}
\newcommand\HamContrProp[1]{\operatorname{Ham}^{\operatorname{contr,prop}}_{#1}}
\newcommand\Model[1]{\operatorname{Model}_{#1}}
\newcommand\I[1]{\iota^{#1}}
\newcommand\IProp[1]{\iota^{#1,\operatorname{prop}}}
\newcommand\II[2]{\operatorname{I}_{#1}^{#2}}
\newcommand\III[1]{i_{#1}}
\newcommand\IIII[1]{j_{#1}}
\newcommand{\hhat}[1]{\widehat{#1}}
\newcommand\muL{\mu^L}
\newcommand\muD{\mu^D}
\newcommand\psiD{\psi^D}
\newcommand\psiL{\psi^L}
\newcommand\cont{\supseteq}
\newcommand\Stab[2]{\operatorname{Stab}^{#1}_{#2}}
\newcommand\isotr[2]{\rho^{#1,#2}}
\newcommand\quot[2]{\BAR\rho^{#1,#2}}
\newcommand\F[1]{\mathcal{F}_{#1}}
\newcommand\Tstar[1]{T^*_{#1}}
\newcommand\Trans[1]{\operatorname{Act}^{\operatorname{trans}}_{#1}}
\newcommand\Sub[1]{\operatorname{Sub}^{\operatorname{cl}}_{#1}}
\begin{document}

\title{Classification of momentum proper exact Hamiltonian group actions and the equivariant Eliashberg cotangent bundle conjecture}

\author{Fabian Ziltener}
\address{ETH Zurich, Department of Mathematics, R\"amistrasse 101, 8092 Zurich, Switzerland}
\email{fabian.ziltener@math.ethz.ch}

\date{\today}

\begin{abstract}
Let $G$ be a compact and connected Lie group. The Hamiltonian $G$-model functor maps the category of symplectic representations of closed subgroups of $G$ to the category of exact Hamiltonian $G$-actions. Based on previous joint work with Y.~Karshon, the restriction of this functor to the momentum proper subcategory on either side induces a bijection between the sets of isomorphism classes. This classifies all momentum proper exact Hamiltonian $G$-actions (of arbitrary complexity). 

As an extreme case, we obtain a version of the Eliashberg cotangent bundle conjecture for transitive smooth actions. As another extreme case, the momentum proper Hamiltonian $G$-actions on contractible manifolds are exactly the symplectic $G$-representations, up to isomorphism.
\end{abstract}

\maketitle
\tableofcontents

\section{The main result and applications}

Let $G$ be a compact and connected Lie group. We call a Hamiltonian $G$-action \emph{momentum proper} iff every momentum map for the action is proper. The purpose of this article is to classify the momentum proper exact Hamiltonian $G$-actions in terms of the momentum proper symplectic representations of closed subgroups of $G$. To this end I provide a bijection between the sets of equivalence classes of such representations and of such Hamiltonian actions. (See Corollary \ref{cor:class} below.)

The bijection is induced by the $G$-model functor. This is a functor between the category of symplectic representations of closed subgroups of $G$ and the category of exact Hamiltonian $G$-actions.

In order to define these categories and the Hamiltonian $G$-model functor, we need the following. For every $g\in G$ we denote by
\[c_g:G\to G,\quad c_g(a):=gag^{-1},\]\label{c g}
the conjugation by $g$. We define $\SympRep{G}$\label{SympRep} to be the following category:
\begin{itemize}
\item Its objects are the tuples $(H,\rho)=\big(H,V,\si,\rho\big)$, where $H$\label{H} is a closed subgroup of $G$, $(V,\si)$ \footnote{In the article \cite{KZ} we used the notation $\om_V$ for the symplectic form $\si$ on $V$. For simplicity I am using $\si$ here. To help the reader navigate through this article, I have included a list of symbols at the end of this article.} is a (finite dimensional) symplectic vector space and $\rho$\label{rho} is a symplectic $H$-representation on $V$ \wrt $\si$. 
\item Its morphisms between two objects $(H,\rho)$ and $(H',\rho')$ are pairs $(g,T)$\label{g T}, where $g\in G$ and $T:V\to V'$ is a linear symplectic map, %need not be surjective
such that
\begin{eqnarray}\label{eq:gHg -1}&c_g(H)=H',&\\
\label{eq:T rho h}&T\rho_h=\rho'_{c_g(h)}T,\quad\forall h\in H.&
\end{eqnarray}
(The dimension of $V'$ may be bigger than the dimension of $V$. In this case $T$ is not surjective.) The composition of two morphisms is defined by
\begin{equation}\label{eq:g' T' g T}(g',T')\circ(g,T):=\big(g'g,T'T\big).\end{equation}
\end{itemize}

\begin{Rmk} A morphism $(g,T)$ is an isomorphism in the sense of category theory if and only if $T$ is surjective (and hence bijective). In this case the inverse of $(g,T)$ is given by $(g^{-1},T^{-1})$.
\end{Rmk}

Let $\psi=\big(M,\om,\psi\big)$\label{M om psi} be a Hamiltonian $G$-action\footnote{By this we mean that $(M,\om)$ is a symplectic manifold and $\psi$ is a Hamiltonian $G$-action on $M$.}. We call $\psi$ exact iff there exists a $\psi$-invariant primitive of $\om$.\footnote{By this we mean a $\psi$-invariant one-form whose exterior derivative equals $\om$. The action $\psi$ is exact if $\om$ is exact, because we assume that $G$ is compact. (We obtain a $\psi$-invariant primitive from an arbitrary primitive by averaging \wrt the Haar measure on $G$.)} We call $\psi$ \emph{momentum proper} iff every momentum map\footnote{By definition, every momentum map $\mu$ for a Hamiltonian $G$-action $\psi$ on a symplectic manifold $(M,\om)$ is equivariant \wrt $\psi$ and the coadjoint $G$-action $\Ad^*$ on $\Lie(G)^*$. This means that $\mu(\psi_g(x))=\Ad^*(g)(\mu(x))$, for every $g\in G$ and $x\in M$, where $\psi_g:=\psi(g,\cdot)$.} for $\psi$ is proper.

We define $\HamEx{G}$\label{Ham Ex G} to be the following category:
\begin{itemize}
\item Its objects are the exact Hamiltonian $G$-actions $\big(M,\om,\psi\big)$ with $M$ connected (and without boundary\footnote{In this article every manifold is assumed to have empty boundary.}).
\item Its morphisms between two objects $(M,\om,\psi)$ and $(M',\om',\psi')$ are proper symplectic embeddings $\Phi$ from $M$ to $M'$ that intertwine $\psi$ with $\psi'$.\footnote{This means that $\psi'_g\circ\Phi=\Phi\circ\psi_g$ for every $g\in G$.} (The dimension of $M'$ may be bigger than the dimension of $M$.) Composition is the composition of maps.
\end{itemize}
\begin{Rmk} The isomorphisms between two objects are equivariant symplectomorphisms.
\end{Rmk}
We define the \emph{Hamiltonian $G$-model functor}\label{Model G}
\[\Model{G}:\SympRep{G}\to\HamEx{G}\]
as follows:
\begin{itemize}
\item For every object $(H,\rho)$ of $\SympRep{G}$ we define\label{Y rho om rho psi rho}
\[\Model{G}(H,\rho)=\big(Y_\rho,\om_\rho,\psi_\rho)\]
to be the centred Hamiltonian $G$-model action induced by $(H,\rho)$.\footnote{In the article \cite{KZ} we used the notation $(Y,\om_Y)$ instead of $(Y_\rho,\om_\rho)$. To make the dependence on $\rho$ explicit, I am using $(Y_\rho,\om_\rho)$ here. To help the reader navigate through this article, I have included a list of symbols at the end of this article.} This action is defined as follows. (For details see \cite[Section 3]{KZ}.) We define $\psiD_\rho$\label{psi D rho} to be the diagonal $H$-action on $T^*G\x V$ induced by the right translation on $G$ and by $\rho$. We denote by $\g,\h$\label{g h} the Lie algebras of $G,H$ and by
\begin{equation}\label{eq:nu rho}\nu_\rho:V\to\h^*\end{equation}
the unique momentum map for $\rho$ that vanishes at $0$.\footnote{Viewing the symplectic vector space $(V,\si)$ as a symplectic manifold, the representation $\rho$ is a Hamiltonian $G$-action on $V$. Hence it admits a momentum map. In the article \cite{KZ} we used the notation $\mu_V$ instead of $\nu_\rho$. To make the dependence on $\rho$ explicit, I am using $\nu_\rho$ here.} For $a\in G$ and $\phi\in\g^*$ we denote by $a\phi\in T_a^*G$ the image of $\phi$ under the derivative of the left translation by $a$. We define\footnote{In the article \cite{KZ} we used the notation $\wt\mu_D$ for this map. For simplicity I have dropped the tilde here.}\label{mu D rho}
\begin{equation}\label{eq:muD H rho}\muD_{H,\rho}:=\muD_\rho:T^*G\x V\to\h^*,\quad\muD_\rho\big(a,a\phi,v\big):=-\phi|\h+\nu_\rho(v).\end{equation}
This is a momentum map for $\psiD_\rho$. The pair $(Y_\rho,\om_\rho)$ is defined to be the symplectic quotient of $\psiD_\rho$ at 0 \wrt $\muD_\rho$. This means that
\begin{equation}\label{eq:Y H rho}Y_{H,\rho}:=Y_\rho=(\muD_\rho)^{-1}(0)/\psiD_\rho.\end{equation}
(The subgroup $H$ is compact, since it is closed and $G$ is compact. Therefore, the restriction of $\psiD_\rho$ to $(\muD_\rho)^{-1}(0)$ is proper. Since it is also free, the symplectic quotient is well-defined.) The left translation by $G$ on $G$ induces a $G$-action on $T^*G$ and hence on $T^*G\x V$. Since left and right translation commute, this action preserves $(\muD_\rho)^{-1}(0)$ and descends to a $G$-action $\psi_\rho$ on $Y_\rho$, the symplectic quotient of $T^*G\x V$ by the diagonal $H$-action.\footnote{This can be seen as part of symplectic reduction in stages.} This defines $\Model{G}(H,\rho)=\big(Y_\rho,\om_\rho,\psi_\rho)$. 
\item For every $g\in G$ we denote by $R^g:G\to G$\label{R g}, $R^g(a):=ag$, the right translation by $g$, and by $R^g_*:T^*G\to T^*G$ the induced map. The map $\Model{G}$ assigns to every morphism $(g,T):(H,\rho)\to(H',\rho')$ of $\SympRep{G}$ the morphism $\Model{G}(g,T)$ of $\HamEx{G}$ given by
\begin{equation}\label{eq:Model G}\Model{G}(g,T)(y):=\big[R^{g^{-1}}_*(a,a\phi),Tv\big],\end{equation}
where $(a,a\phi,v)$ is an arbitrary representative of $y$. (Here on the right hand side we denote by $\big[a',a'\phi',v'\big]$ the equivalence class of $\big(a',a'\phi',v'\big)$.)
\end{itemize}
The main result is the following. (As always, we assume that $G$ is compact and connected.)
\begin{thm}[Hamiltonian $G$-model functor]\label{thm:model}
\begin{enui}
\item\label{thm:model:well obj}(well-definedness on objects) The map $\Model{G}$ is well-defined on objects, i.e., $\psi_\rho$ is indeed an exact Hamiltonian $G$-action. 
\item\label{thm:model:well mor}(well-definedness on morphisms) The map $\Model{G}$ is well-defined on morphisms, i.e., 
\begin{equation}\label{eq:R g -1 * T}\left(R^{g^{-1}}_*\x T\right)\left((\muD_\rho)^{-1}(0)\right)\sub(\muD_{\rho'})^{-1}(0),\end{equation}
the right hand side of \reff{eq:Model G} does not depend on the choice of a representative $(a,a\phi,v)$, and $\Model{G}(g,T)$ is a morphism of $\HamEx{G}$. 
\item\label{thm:model:funct}(functoriality) The map $\Model{G}$ is a covariant functor.
\item\label{thm:model:inj}(essential injectivity) The map between the sets of isomorphism classes induced by $\Model{G}$ is injective.
\item\label{thm:model:mor}(morphisms) Let $(H,\rho)$ and $(H',\rho')$ be objects of $\SympRep{G}$, and $(g,T),(\hhat g,\hhat T)$ be morphisms between these objects. $\Model{G}$ maps these morphisms to the same morphism if and only if
\begin{equation}\label{eq:wt T rho'}h':=\hhat gg^{-1}\in H',\quad\hhat T=\rho'_{h'}T.\end{equation}
\item\label{thm:model:prop mor} (momentum properness and morphisms) Let $A$ and $A'$ be objects of $\SympRep{G}$ or let them be objects of $\HamEx{G}$, such that $A'$ is momentum proper\footnote{Recall that an object $A$ of $\SympRep{G}$ is a tuple $\big(H,V,\si,\rho\big)$, where $H$ is a closed subgroup of $G$ and $(V,\si,\rho)$ is a symplectic $H$-representation. Viewing $(V,\si)$ as a symplectic manifold, $\rho$ is a Hamiltonian $H$-action. We call $A$ \emph{momentum proper} iff this action is momentum proper, i.e., if every momentum map for $\rho$ is proper.} %end footnote
and there exists a morphism from $A$ to $A'$. Then $A$ is momentum proper.
\item\label{thm:model:prop fct} (momentum properness and model functor) An object of $\SympRep{G}$ is momentum proper if and only if its image under $\Model{G}$ is momentum proper.
\item\label{thm:model:surj} (essential surjectivity) Every momentum proper object of $\HamEx{G}$ is isomorphic to an object in the image of $\Model{G}$.
\end{enui}
\end{thm}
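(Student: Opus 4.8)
The plan is to split the eight assertions into the formal/computational ones --- parts (i)--(iii), (v) --- the two momentum-properness statements (vi)--(vii), which reduce to inspecting explicit momentum maps, and the two substantive statements (iv) and (viii), which rest on the Marle--Guillemin--Sternberg normal form and the joint work \cite{KZ}. For (i), I would recall that $Y_\rho$ is the Marsden--Weinstein quotient of $T^*G\times V$ by the free and proper diagonal $H$-action at the regular value $0$ of $\muD_\rho$, that $\om_\rho$ and the residual left $G$-action $\psi_\rho$ are Hamiltonian by \cite[Section 3]{KZ}, and that exactness descends from a $\psiD_\rho$-invariant (twisted) Liouville primitive on $T^*G\times V$. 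For (ii), the inclusion \reff{eq:R g -1 * T} is the computation that $R^{g^{-1}}_*$ shifts the $\h$-part of the cotangent factor by $c_g$ while $T$ carries $\nu_\rho$ to $\nu_{\rho'}$ on $\operatorname{im}T$ (using \reff{eq:T rho h} and $\nu_\rho(0)=0=\nu_{\rho'}(0)$), so the defining constraint is preserved; independence of the representative holds because \reff{eq:Model G} is $H$-equivariant and $(g,T)$ conjugates $H$ to $H'$; and $\Model{G}(g,T)$ is a proper symplectic embedding intertwining the $G$-actions because $R^{g^{-1}}_*\times T$ is one and left and right translations commute. Functoriality (iii) is immediate from \reff{eq:g' T' g T} and \reff{eq:Model G}. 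For (v), unwinding the $H'$-orbit equivalence relation shows $\Model{G}(g,T)$ and $\Model{G}(\hhat g,\hhat T)$ agree on every class $[a,a\phi,v]$ precisely when $h':=\hhat g g^{-1}\in H'$ and $\hhat T=\rho'_{h'}T$, which is \reff{eq:wt T rho'}; the converse is clear.

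For (vi): a morphism of $\HamEx{G}$ is by definition a proper symplectic embedding $\Phi$ intertwining the $G$-actions, so if $\mu'$ is a momentum map for the target then $\mu'\circ\Phi$ is one for the source and is proper (as $\Phi$ and $\mu'$ are); since any two momentum maps differ by a constant in $(\g^*)^G$ and this does not affect properness, the $\HamEx{G}$ case follows. On the $\SympRep{G}$ side, differentiating \reff{eq:T rho h} gives $\nu_{\rho'}\circ T=\Ad^*_{g^{-1}}\circ\nu_\rho$, and since $T$ is a linear injection, properness of $\nu_{\rho'}$ forces that of $\nu_\rho$. For (vii), I would use the explicit formula $\mu_{\psi_\rho}[a,a\phi,v]=\Ad^*_a\phi$ with constraint $\phi|_\h=\nu_\rho(v)$: as $G$ is compact, $\|\mu_{\psi_\rho}\|=\|\phi\|\ge\|\nu_\rho(v)\|$, so if $\nu_\rho$ is proper the preimage of a compact set is the image under the quotient map of a compact subset of $G\times\g^*\times V$; conversely a sequence $v_n\to\infty$ with $\nu_\rho(v_n)$ bounded gives classes $[e,e\phi_n,v_n]$ of bounded momentum with no convergent subsequence (the $H$-action on $V$ is by isometries), so $\Model{G}(H,\rho)$ is momentum proper iff $(H,\rho)$ is.

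For (iv), the key observation is that $Y_\rho\cong G\times_H\big((\g/\h)^*\times V\big)$ is exactly the Marle--Guillemin--Sternberg model attached to $(H,\rho)$ at momentum level $0$, so it has a distinguished point $y_0:=[e,0,0]\in\mu_{\psi_\rho}^{-1}(0)$ with $G$-isotropy $H$ and symplectic slice representation $\rho$. Given an equivariant symplectomorphism $\Phi\colon Y_\rho\to Y_{\rho'}$, it intertwines momentum maps up to a constant $c\in(\g^*)^G$ (which is well-defined because $Y_\rho$ is connected --- here $G$ connected is used), so $\mu_{\psi_{\rho'}}(\Phi(y_0))=c$; comparing with $\mu_{\psi_{\rho'}}[a',a'\phi',v']=\Ad^*_{a'}\phi'$ forces the representative $\phi'$ of $\Phi(y_0)$ to be central. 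Under this centrality hypothesis, Lemma \ref{le:A y} identifies $\Phi(y_0)$ with a distinguished point $[a',a'\phi',0]$ for which $c_{a'}(H')=G_{\Phi(y_0)}=H$; setting $g:=a'^{-1}$ gives $c_g(H)=H'$, and the slice representation of $Y_{\rho'}$ at $[a',a'\phi',0]$, read as an $H$-representation, is $h\mapsto\rho'_{c_g(h)}$ on $V'$. Since $\Phi$ restricts to a symplectic $H$-equivariant isomorphism of slice representations, it produces a symplectic bijection $T\colon V\to V'$ with $T\rho_h=\rho'_{c_g(h)}T$, i.e.\ $(g,T)$ is an isomorphism $(H,\rho)\to(H',\rho')$.

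The main obstacle is essential surjectivity (viii). Let $(M,\om,\psi)$ be a momentum-proper exact Hamiltonian $G$-action with momentum map $\mu$. Exactness and momentum properness make $\mu^{-1}(0)$ nonempty (it is the minimum set of $\|\mu\|^2$) and compact, so it is covered by finitely many Marle--Guillemin--Sternberg charts, each equivariantly symplectomorphic to a neighbourhood of the distinguished point of some model $Y_{\rho_x}$. The hard part is to upgrade these local identifications to one global one: to produce a single closed subgroup $H$, a single symplectic $H$-representation $(V,\si,\rho)$, and a $G$-equivariant symplectomorphism $M\cong Y_\rho$. I expect this reconstruction to be imported essentially verbatim from \cite{KZ}, where the models were built to be universal for precisely this problem --- exactness rigidifies the Liouville data, momentum properness controls $M$ away from $\mu^{-1}(0)$ --- with the present theorem being the functorial repackaging. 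Granting that reconstruction, (viii) is immediate since the reconstructed object is $\Model{G}(H,\rho)$ by definition. Thus the crux of the whole theorem is this globalisation of the normal form; everything else is either formal or a direct computation with the momentum maps above.
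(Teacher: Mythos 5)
Your overall decomposition and most of the individual arguments track the paper's proof closely: parts (i)--(iii) and (v) are handled the same way, your two momentum-properness arguments for (vi) and (vii) are equivalent to the paper's (sequential rather than direct compactness arguments, which is immaterial), and (viii) is cited from \cite{KZ} exactly as the paper does. The one genuine gap is in your proof of essential injectivity (iv), at the point where you write that Lemma \ref{le:A y} ``identifies $\Phi(y_0)$ with a distinguished point $[a',a'\phi',0]$ for which $c_{a'}(H')=G_{\Phi(y_0)}=H$.'' Lemma \ref{le:A y} does not produce this equality; the condition $\Stab{\psi_{\rho'}}{y'}=c_{a'}(H')$ for $y':=\Phi(y_0)=[a',a'\phi',v']$ is a \emph{hypothesis} of that lemma, and it is exactly the nontrivial input you are missing. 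What comes for free (Remark \ref{rmk:stab} together with equivariance of $\Phi$) is only the inclusion $H=\Stab{\psi_{\rho'}}{y'}\sub c_{a'}(H')$: the stabilizer of $y'$ consists of the elements $c_{a'}(h')$ with $\rho'_{h'}v'=v'$, and nothing a priori forces $v'$ to be fixed by all of $H'$. Without the reverse inclusion you can neither conclude $c_g(H)=H'$ (which is needed for your $(g,T)$ to be a morphism of $\SympRep{G}$ at all) nor apply Lemma \ref{le:A y} at $y'$ to identify $\slice{\psi_{\rho'}}{y'}$ with $\rho'$.

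The paper closes this gap by running the same stabilizer computation for $\Phi^{-1}$, which shows that $H'$ is also conjugate to a subgroup of $H$, and then invoking Lemma \ref{le:N conj N'}: two compact subgroups of a Lie group that are each conjugate into the other are conjugate to each other. That lemma is not a formality --- its proof uses invariance of domain and the finiteness of the set of connected components of a compact subgroup, and the statement fails without compactness --- so this is a genuinely missing idea rather than a routine omission (a dimension count only gives $\dim H=\dim H'$ and says nothing about components). Once $H=c_{a'}(H')$ is in hand, the rest of your argument for (iv) --- centrality of $\mu_{\rho'}(y')$ via connectedness of $Y_\rho$, Lemma \ref{le:A y} applied at both $y_0$ and $y'$, and transport of the slice representation along $\Phi$ --- is exactly the paper's chain $\rho\iso\slice{\psi_\rho}{y_0}\iso\slice{\psi_{\rho'}}{y'}\iso\rho'$. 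Two smaller glosses, not gaps: properness of the induced map $Y_\rho\to Y_{\rho'}$ in (ii) also uses properness of the quotient projection $\pi_{\rho'}$, i.e.\ compactness of $H'$, not just properness of $R^{g^{-1}}_*\x T$; and there is no reason to normalise $v'$ to $0$ --- what is true, and is proved inside Lemma \ref{le:A y}, is only that $v'$ is a $\rho'$-fixed vector.
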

\begin{Rmk}
Theorem \ref{thm:model}\reff{thm:model:mor} characterizes the extent to which the functor $\Model{G}$ is faithful.
\end{Rmk}
Theorem \ref{thm:model} puts previous joint work \cite{KZ} with Y.~Karshon into a categorical framework. Namely, part \reff{thm:model:surj} of this theorem (essential surjectivity of $\Model{G}$) was proved in \cite[1.5.~Theorem]{KZ}, without introducing the categorical setup used in the present article. 

The other parts of Theorem \ref{thm:model} will be proved in the next section. The proof of \reff{thm:model:inj} (essential injectivity) is based on Lemma \ref{le:A y}, which provides criteria under which the symplectic quotient representation of the model action $\Model{G}(H,\rho)$ at a given point is isomorphic to $(H,\rho)$. We also use the fact that if two compact subgroups of a Lie group are conjugate to subgroups of each other then they are conjugate to each other. (This follows from Lemma \ref{le:N conj N'} below.)
\begin{Rmk} Naively, in the definition of a morphism of $\SympRep{G}$, one could try to weaken the condition \eqref{eq:gHg -1} to either the condition $c_g(H)\sub H'$ or $c_g(H)\cont H'$. With this modification the model functor would no longer be well-defined on morphisms. (``$\cont$'' is needed in order for \eqref{eq:R g -1 * T} to hold and ``$\sub$'' is needed for the right hand side of \reff{eq:Model G} not to depend on the choice of a representative. See the proof of Theorem \ref{thm:model}\reff{thm:model:well mor} below.)
\end{Rmk}
We denote by \label{SympRepProp G}\label{HamExProp G}
\[\SympRepProp{G},\quad\HamExProp{G}\]
the full subcategories of $\SympRep{G}$ and $\HamEx{G}$ consisting of momentum proper objects. Theorem \ref{thm:model} has the following application.
\begin{cor}[classification of momentum proper exact Hamiltonian actions]\label{cor:class} The functor $\Model{G}$ induces a bijection
\begin{equation}\label{eq:iso class}\big\{\textrm{isomorphism class of }\SympRepProp{G}\big\}\to\big\{\textrm{isomorphism class of }\HamExProp{G}\big\}.
\end{equation}
\end{cor}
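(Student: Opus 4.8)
The plan is to obtain Corollary \ref{cor:class} as a formal consequence of Theorem \ref{thm:model}, with essentially no new content beyond bookkeeping. First I would verify that $\Model{G}$ restricts to a functor $\SympRepProp{G}\to\HamExProp{G}$. On objects this is the forward implication of Theorem \ref{thm:model}\reff{thm:model:prop fct}: a momentum proper object $(H,\rho)$ is sent to the object $\Model{G}(H,\rho)$ of $\HamEx{G}$ (well-defined by \reff{thm:model:well obj}), which is again momentum proper, hence an object of $\HamExProp{G}$. On morphisms there is nothing to check, since $\SympRepProp{G}$ and $\HamExProp{G}$ are full subcategories and $\Model{G}$ is already a functor by \reff{thm:model:funct}. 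Consequently $\Model{G}$ induces the well-defined map \eqref{eq:iso class} on isomorphism classes.

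For injectivity of \eqref{eq:iso class}, suppose $(H,\rho)$ and $(H',\rho')$ are objects of $\SympRepProp{G}$ whose images are isomorphic in $\HamExProp{G}$, hence in $\HamEx{G}$. By essential injectivity, Theorem \ref{thm:model}\reff{thm:model:inj}, the objects $(H,\rho)$ and $(H',\rho')$ are isomorphic in $\SympRep{G}$; since $\SympRepProp{G}$ is a full subcategory, this isomorphism already lies in $\SympRepProp{G}$, so the two isomorphism classes agree.

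For surjectivity of \eqref{eq:iso class}, let $A'$ be an object of $\HamExProp{G}$. By essential surjectivity, Theorem \ref{thm:model}\reff{thm:model:surj}, there is an object $(H,\rho)$ of $\SympRep{G}$ together with an isomorphism $\Model{G}(H,\rho)\iso A'$ in $\HamEx{G}$. Since $A'$ is momentum proper and an isomorphism is in particular a morphism, Theorem \ref{thm:model}\reff{thm:model:prop mor} shows that $\Model{G}(H,\rho)$ is momentum proper; then Theorem \ref{thm:model}\reff{thm:model:prop fct}, in the direction ``image momentum proper $\then$ object momentum proper'', shows that $(H,\rho)$ is an object of $\SympRepProp{G}$. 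Hence the class of $A'$ lies in the image of \eqref{eq:iso class}, which completes the proof.

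I do not expect a genuine obstacle in this argument: the substantive work is entirely inside Theorem \ref{thm:model}, above all essential injectivity \reff{thm:model:inj} (resting on Lemmas \ref{le:A y} and \ref{le:N conj N'}) and the joint result \reff{thm:model:surj}. The only mild subtleties within the corollary itself are that momentum properness must be transported across an abstract isomorphism, handled via \reff{thm:model:prop mor}, and that the resulting isomorphism of symplectic representations must be recognized as living in the momentum proper subcategory, handled by fullness.
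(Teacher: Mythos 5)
Your proposal is correct and follows essentially the same route as the paper, which deduces the corollary formally from Theorem \ref{thm:model}: well-definedness from (\ref{thm:model:well obj},\ref{thm:model:well mor},\ref{thm:model:funct}) and \reff{thm:model:prop fct}``$\then$'', injectivity from \reff{thm:model:inj} with fullness, and surjectivity from \reff{thm:model:surj} and \reff{thm:model:prop fct}``$\follows$''. If anything, you are slightly more explicit than the paper in invoking \reff{thm:model:prop mor} to transport momentum properness across the isomorphism before applying \reff{thm:model:prop fct}``$\follows$'', a step the paper's one-line citation leaves implicit.
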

We call this bijection the \emph{classifying map (for momentum proper exact Hamiltonian $G$-actions)}.
\begin{Rmks}\begin{itemize}\item It follows from Theorem \ref{thm:model}\reff{thm:model:prop mor} that the isomorphism class of any object of $\SympRepProp{G}$ is its isomorphism class in the bigger category $\SympRep{G}$. A similar remark applies to $\HamExProp{G}$.
\item Corollary \ref{cor:class} classifies all momentum proper exact Hamiltonian $G$-actions up to isomorphism.
\item Surjectivity of the classifying map \eqref{eq:iso class} follows from Theorem \ref{thm:model}\reff{thm:model:surj}, which was proved in joint work with Y.~Karshon \cite[1.5.~Theorem]{KZ}.
\item The inverse of the classifying map \eqref{eq:iso class} is induced by assigning to a Hamiltonian action its symplectic quotient representation at any suitable point, see Proposition \ref{prop:inv} below.
\item In contrast with Corollary \ref{cor:class} the map induced by $\Model{G}$ between the sets of isomorphism classes of $\SympRep{G}$ and $\HamEx{G}$ is not surjective. To see this, let $Q$ be a connected compact manifold of positive dimension, without boundary. We define $\om$ to be the canonical symplectic form on $T^*Q$ and $\psi$ to be the trivial $G$-action on $T^*Q$. 

We claim that the isomorphism class of $(T^*Q,\om,\psi)$ does not lie in the image of $\Model{G}$. To see this, assume that $\big(H,V,\si,\rho\big)$ is an object of $\SympRep{G}$ for which $\psi_\rho$ is trivial. Then $H=G$ and therefore, $Y_\rho$ is canonically diffeomorphic to $V$. If $\big(Y_\rho,\om_\rho,\psi_\rho\big)$ is isomorphic to $\big(T^*Q,\om,\psi\big)$ then it follows that $Q$ is a singleton. This proves the claim.
\item Many classification results are known for Hamiltonian group actions whose complexity is low. (By definition, the complexity is half the dimension of a generic non-empty reduced space. For references see \cite{KZ}.) What makes Corollary \ref{cor:class} special is that it classifies Hamiltonian actions of \emph{arbitrary} complexity.
\end{itemize}
\end{Rmks}
\begin{proof}[Proof of Corollary \ref{cor:class}] By Theorem \ref{thm:model}(\ref{thm:model:well obj},\ref{thm:model:well mor},\ref{thm:model:prop fct}``$\then$'',\ref{thm:model:funct}) the map \eqref{eq:iso class} is well-defined. By Theorem \ref{thm:model}(\ref{thm:model:inj},\ref{thm:model:surj},\ref{thm:model:prop fct}``$\follows$'') the map \eqref{eq:iso class} is bijective. This proves Corollary \ref{cor:class}.
\end{proof}
By considering the extreme case of the full subgroup $H=G$, this corollary implies that the momentum proper Hamiltonian $G$-actions on contractible manifolds are exactly the momentum proper symplectic $G$-representations, up to isomorphism. See Corollary \ref{cor:contr} below. On the other hand, by considering the extreme case in which the vector space $V$ is trivial, using Corollary \ref{cor:class}, we can classify the \emph{critical} momentum proper exact Hamiltonian $G$-actions in terms of transitive $G$-actions on manifolds.

To explain the latter application, we call an object $(M,\om,\psi)$ of $\HamExProp{G}$ \emph{critical} iff $M$ is homotopy equivalent to some closed\footnote{This means compact and without boundary.} manifold of dimension equal to $\dim(M)/2$.
\begin{rmk}[criticality]\label{rmk:crit} By Corollary \ref{cor:class} there exists an object $(H,\rho)=\big(H,V,\si,\rho\big)$ of $\SympRepProp{G}$, such that $\Model{G}(H,\rho)$ is isomorphic to $(M,\om,\psi)$. The manifold part of $\Model{G}(H,\rho)$ is homotopy equivalent to the closed manifold $G/H$ and has dimension $2(\dim G-\dim H)+\dim V\geq2\dim(G/H)$. It follows that $M$ is not homotopy equivalent to any closed manifold of dimension bigger than $\dim(M)/2$. This justifies the terminology \emph{critical}.
\end{rmk}
We denote\label{HamCrit G}
\[\HamCrit{G}:=\textrm{full subcategory of $\HamExProp{G}$ consisting of critical objects}.\]
For every manifold $Q$ we denote by $\om_Q$\label{om Q} the canonical symplectic form on $T^*Q$. We define the \emph{$G$-cotangent functor} $\Tstar{G}$\label{T*G} to be the canonical functor from the category of $G$-actions on manifolds and $G$-equivariant diffeomorphisms to the category of Hamiltonian $G$-actions and $G$-equivariant symplectomorphisms. It takes an object $(Q,\theta)$ to $(T^*Q,\om_Q)$ together with lifted $G$-action $\theta_*$, and a morphism $f:Q\to Q'$ to the lifted map $f_*:T^*Q\to T^*Q'$.

We define $\Trans{G}$\label{Trans G} to be the category whose objects are the transitive smooth $G$-actions on connected closed manifolds and whose morphisms are the $G$-equivariant diffeomorphisms.
\begin{cor}[classification of critical momentum proper exact Hamiltonian actions]\label{cor:class crit} The functor $\Tstar{G}$ induces a bijection
\begin{equation}\label{eq:iso class }\big\{\textrm{isomorphism class of }\Trans{G}\big\}\to\big\{\textrm{isomorphism class of }\HamCrit{G}\big\}.
\end{equation}
\end{cor}
\begin{Rmks}[classification of critical actions, Eliashberg cotangent bundle conjecture]
\begin{itemize}
\item Part of the statement is that $\Tstar{G}$ maps $\Trans{G}$ to $\HamCrit{G}$.
\item The isomorphism class of any object of $\HamCrit{G}$ in $\HamCrit{G}$ is its isomorphism class in the bigger category $\HamExProp{G}$ (or in $\HamEx{G}$). 
\item Corollary \ref{cor:class crit} classifies the critical momentum proper exact Hamiltonian $G$-actions in terms of transitive $G$-actions on manifolds.
\item The \emph{cotangent functor $T^*$} is the canonical functor from the category of connected closed smooth manifolds and diffeomorphisms to the category of symplectic manifolds and symplectomorphisms. It agrees with $\Tstar{\{e\}}$. The Eliashberg cotangent bundle conjecture states that $T^*$ is essentially injective, i.e., it induces an injective map between the sets of isomorphism classes. See \cite[Problem 31, p.~561]{MS}. Very little is known about this conjecture. See \cite{Ab,EKS,ES} for some results.
\item By Corollary \ref{cor:class crit} the restriction of the functor $\Tstar{G}$ to the category $\Trans{G}$ of transitive $G$-actions is essentially injective. This proves an equivariant version of the Eliashberg cotangent bundle conjecture. In fact, Corollary \ref{cor:class crit} provides more information, namely it also specifies the image of the class of objects of $\Trans{G}$ under $\Tstar{G}$, up to isomorphism.
\item The philosophy behind this application is that symmetry makes problems more accessible. In the present situation it allows for a classification of the structures at hand (transitive $G$-actions and critical Hamiltonian $G$-actions). The same philosophy was for example used recently in \cite{FPP}, where the authors used Delzant's classification of symplectic toric manifolds to prove that certain equivariant symplectic capacities are (dis-)continuous. (Without symmetry the question whether a given symplectic capacity is continuous is hard in general.)
\end{itemize}
\end{Rmks}
We will prove Corollary \ref{cor:class crit} in Section \ref{sec:proof:cor:class crit}.\\

As another application of Corollary \ref{cor:class}, we now classify the momentum proper Hamiltonian $G$-actions on contractible manifolds. Here we consider another extreme case, in which the subgroup $H$ equals $G$. We denote by $\symprep{G}$\label{symprep G} the category whose objects are symplectic $G$-representations and whose morphisms are $G$-equivariant linear symplectic maps (possibly not surjective), and by\label{symprepprop G}
\[\symprepprop{G}\]
the full subcategory consisting of momentum proper objects. We denote by $\HamContr{G}$\label{HamContr G} the full subcategory of $\HamEx{G}$ consisting of those objects $(M,\om,\psi)$ for which $M$ is contractible, and by\label{HamContrProp G}
\[\HamContrProp{G}\]
the full subcategory of $\HamContr{G}$ consisting of momentum proper objects. We denote by\label{I G}
\[\I{G}:\symprep{G}\to\HamContr{G},\quad\IProp{G}:\symprepprop{G}\to\HamContrProp{G}\]
the inclusion functor and its restriction to the momentum proper subcategories. We denote by $\I{G}_*,\IProp{G}_*$ the maps between the sets of isomorphism classes induced by $\I{G},\IProp{G}$. 
\begin{rmks}\label{rmk:iso}\begin{enui}\item\label{rmk:iso:iso class} The isomorphism class of any object in $\symprepprop{G}$ is its isomorphism class in the bigger category $\symprep{G}$. This follows from Remark \ref{rmk:sympl quot Phi}\reff{rmk:sympl quot Phi:Ham} below. Similar remarks apply to the subcategory $\HamContrProp{G}$ of $\HamContr{G}$ and the subcategory $\HamContr{G}$ of $\HamEx{G}$.
\item\label{rmk:iso:I G *} The map $\I{G}_*$ extends the map $\IProp{G}_*$. By \reff{rmk:iso:iso class} this statement makes sense.
\end{enui}
\end{rmks}
\begin{cor}[classification of momentum proper Hamiltonian actions on contractible manifolds]\label{cor:contr} \begin{enui}\item\label{cor:contr:inj} The map $\I{G}_*$ is injective.
\item\label{cor:contr:surj} The map $\IProp{G}_*$ is surjective.
\end{enui}
\end{cor}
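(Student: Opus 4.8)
The plan is to deduce both statements from the main theorem by comparing the model functor $\Model{G}$ with the inclusion functor $\I{G}$. The key observation is that a symplectic $G$-representation $(V,\si,\rho)$ is the same thing as an object $(G,\rho)$ of $\SympRep{G}$ with full structure group $H=G$, and that for such an object the model action $\Model{G}(G,\rho)=(Y_\rho,\om_\rho,\psi_\rho)$ should be equivariantly symplectomorphic to $(V,\om_\si,\rho)$ itself — indeed, when $H=G$ the level set $(\muD_\rho)^{-1}(0)$ inside $T^*G\x V$ consists of points $(a,a\phi,v)$ with $\phi|\g=\phi=\nu_\rho(v)$, so the $G$-action (by right translation, diagonally) is free and transitive on the $T^*G$ factor, and the quotient is identified with $V$ via $[a,a\nu_\rho(v),v]\mapsto \rho_{a^{-1}}v$ (or a similar formula). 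Under this identification the residual left $G$-action $\psi_\rho$ becomes $\rho$. Thus $\I{G}$ factors (up to natural isomorphism) through $\Model{G}$ restricted to the subcategory of objects with $H=G$, with target $\HamContr{G}\subset\HamEx{G}$ since $V$ is contractible. First I would make this factorization precise, checking that a $G$-equivariant linear symplectic map $V\to V'$ corresponds to the morphism $(e,T)$ of $\SympRep{G}$ and that $\Model{G}(e,T)$ matches $T$ under the identifications.

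For part \reff{cor:contr:inj}, suppose two symplectic $G$-representations $\rho,\rho'$ have isomorphic images under $\I{G}$, i.e.\ there is an equivariant symplectomorphism $V\to V'$. Then $\Model{G}(G,\rho)$ and $\Model{G}(G,\rho')$ are isomorphic in $\HamEx{G}$, so by Theorem \ref{thm:model}\reff{thm:model:inj} (essential injectivity) the objects $(G,\rho)$ and $(G,\rho')$ are isomorphic in $\SympRep{G}$: there is $(g,T)$ with $c_g(G)=G$ (automatic) and $T\rho_h=\rho'_{c_g(h)}T$ for all $h\in G$, with $T$ bijective. Composing $T$ with $\rho'_{g}$ (or rather, absorbing the inner automorphism $c_g$, which is trivial on representations up to the isomorphism $\rho'\circ c_g\iso\rho'$ implemented by $\rho'_g$ itself) yields a genuine $G$-equivariant linear symplectic isomorphism $V\to V'$, so $\rho\iso\rho'$ in $\symprep{G}$. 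Hence $\I{G}_*$ is injective.

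For part \reff{cor:contr:surj}, let $(M,\om,\psi)$ be a momentum proper object of $\HamContr{G}$, so $M$ is contractible, hence connected. By Theorem \ref{thm:model}\reff{thm:model:surj} (essential surjectivity) there is an object $(H,\rho)$ of $\SympRep{G}$ with $\Model{G}(H,\rho)\iso(M,\om,\psi)$. Since $M$ is contractible and momentum proper, I would argue that $Y_\rho$ is contractible and momentum proper, which forces $H=G$: indeed $Y_\rho$ fibers over $G/H$ (the base of the $T^*G/H$-factor), and for $Y_\rho$ to be simply connected — a fortiori contractible — one needs $\pi_1(G/H)=\pi_0$-type obstructions to vanish; more robustly, momentum properness of $\rho$ together with the structure of $Y_\rho$ as a bundle with fiber $V$ over $G\times_H(\dots)$ pins down that the base $G/H$ must be a point for $Y_\rho$ to be contractible, because $G/H$ is a closed manifold that is a retract-related piece of $Y_\rho$ and a nontrivial compact homogeneous space is never contractible. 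Granting $H=G$, the object $(G,\rho)$ corresponds via the factorization above to a symplectic $G$-representation $\rho$ with $\I{G}(\rho)=\Model{G}(G,\rho)\iso(M,\om,\psi)$; momentum properness of $\rho$ follows from Theorem \ref{thm:model}\reff{thm:model:prop fct}. Therefore $\IProp{G}_*$ is surjective.

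The main obstacle I expect is the step in part \reff{cor:contr:surj} showing that contractibility of $Y_\rho$ forces $H=G$. The cleanest route is probably to exhibit $Y_\rho$ explicitly as a vector bundle over the coadjoint-orbit-type space $G\times_H\calO$ associated to $\rho$ (as in \cite[Section 3]{KZ}) — or, more simply, to note that $Y_\rho$ deformation retracts onto a compact submanifold built from $G/H$, so $\pi_*(Y_\rho)\ne 0$ unless $G/H$ is a point — and to combine this with the hypothesis that $M$, and hence $Y_\rho$, is contractible. I would also need to double-check the identification $\Model{G}(G,\rho)\iso(V,\om_\si,\rho)$ against the precise conventions of \cite{KZ}; the sign and left/right-translation bookkeeping in \reff{eq:Model G} must be tracked carefully so that the residual $G$-action comes out as $\rho$ and not as a twist of it.
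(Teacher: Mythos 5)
Your proposal is correct. For part \reff{cor:contr:surj} it follows essentially the same route as the paper: apply essential surjectivity to obtain $(H,\rho)$ with $\Model{G}(H,\rho)\iso\psi$, transfer momentum properness via Theorem \ref{thm:model}(\ref{thm:model:prop mor},\ref{thm:model:prop fct}), and show that contractibility of $Y_\rho$ forces $H=G$; the paper disposes of this last step by citing the proof of \cite[7.6 Lemma]{KZ}, and your homotopy-theoretic sketch ($Y_\rho$ is a bundle with contractible fibre over $G/H$, and a compact connected homogeneous space of positive dimension is never contractible) is exactly the content of that lemma, so you correctly identified where the real work sits. For part \reff{cor:contr:inj}, however, you take a genuinely different route from the paper's primary proof: the paper simply differentiates the equivariant symplectomorphism $\Phi:V\to V'$ at $0$ and uses linearity of $\rho,\rho'$ to identify $\rho\iso d\rho(0)$ and $\rho'\iso d\rho'(\Phi(0))$, with no appeal to the model functor at all. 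Your argument via the identification $\I{G}(\rho)\iso\Model{G}(G,\rho)$ (Remark \ref{rmk:V Y rho} of the paper) and Theorem \ref{thm:model}\reff{thm:model:inj} is precisely the alternative the paper records in a remark after its proof; it buys a uniform treatment through the classification machinery, at the cost of invoking the much harder essential-injectivity statement where an elementary two-line argument suffices. Two small points to tighten: the map that untwists the conjugation is $\rho'_{g^{-1}}\circ T$ rather than $\rho'_{g}\circ T$ (one checks $\rho'_{g^{-1}}T\rho_h=\rho'_{g^{-1}}\rho'_{ghg^{-1}}T=\rho'_h\rho'_{g^{-1}}T$; this is the functor $\F{G}$ of the paper's remark), and in part \reff{cor:contr:surj} the implication ``$Y_\rho$ contractible $\Rightarrow H=G$'' uses only contractibility, not momentum properness, which is needed separately only to conclude that $[\rho]$ is a class in $\symprepprop{G}$. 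Neither affects the validity of the argument.
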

\begin{Rmks}\begin{itemize}
\item It follows from part \reff{cor:contr:inj} of this corollary and Remark \ref{rmk:iso}\reff{rmk:iso:I G *} that $\IProp{G}_*$ is injective. Using \reff{cor:contr:surj}, this map is bijective.
\item Part \reff{cor:contr:surj} means that every momentum proper Hamiltonian $G$-action on a contractible symplectic manifold is symplectically linearizable\footnote{We call a symplectic $G$-action $\psi$ on a symplectic manifold $(M,\om)$ \emph{symplectically linearizable} %end footnote
iff there exist a symplectic $G$-representation $(V,\si,\rho)$ and a symplectomorphism between $V$ and $M$ that intertwines $\rho$ and $\psi$.}.
\item The statement of Corollary \ref{cor:contr} means that the momentum proper Hamiltonian $G$-actions on contractible symplectic manifolds agree with the momentum proper symplectic $G$-representations, up to isomorphism. This classifies these actions.
\item Assume that $G$ is non-abelian. In contrast with part \reff{cor:contr:surj} the map $\I{G}_*$ is not surjective. This follows from \cite[Corollary 8.4]{KZ}.
\end{itemize}
\end{Rmks}
For the proof of Corollary \ref{cor:contr}\reff{cor:contr:surj} we need the following.
\begin{rmk}\label{rmk:V Y rho} For every symplectic $G$-representation $(V,\si,\rho)$ the map
\begin{equation}\label{eq:A rho V Y rho}\II{G}{\rho}:V\to Y_\rho,\quad\II{G}{\rho}(v):=[e,0,v],\end{equation}
is a $G$-equivariant symplectomorphism, i.e., an isomorphism from $\I{G}(\rho)=\rho$ to $\Model{G}(G,\rho)$ in $\HamEx{G}$. This follows from a straightforward argument.
\end{rmk}
\begin{proof}[Proof of Corollary \ref{cor:contr}] \textbf{\reff{cor:contr:inj}:} Let $R$ and $R'$ be isomorphism classes of $\symprep{G}$ that are mapped to the same class under $\I{G}_*$. We choose representatives $(V,\si,\rho),(V',\si',\rho')$ of $R,R'$ and an isomorphism $\Phi$ in $\HamContr{G}$ from $\I{G}(\rho)$ to $\I{G}(\rho')$. The differential $d\Phi(0):T_0V\to T_{\Phi(0)}V'$ is an isomorphism from $d\rho(0)$ to $d\rho'(\Phi(0))$ in $\symprep{G}$. Since $\rho$ is linear, the canonical identification between $V$ and $T_0V$ is an isomorphism from $\rho$ to $d\rho(0)$ in $\symprep{G}$. Similarly, $\rho'$ is isomorphic to $d\rho'(\Phi(0))$. Combining these three isomorphisms, it follows that $\rho$ and $\rho'$ are isomorphic in $\symprep{G}$, i.e., $R=R'$. Hence the map $\I{G}_*$ is injective. This proves \reff{cor:contr:inj}.\\

\textbf{\reff{cor:contr:surj}:} Let $\Psi$ be an isomorphism class of objects of $\HamContrProp{G}$. We choose a representative $(M,\om,\psi)$ of $\Psi$. By Theorem \ref{thm:model}\reff{thm:model:surj} there exists an object $(H,\rho)$ of $\SympRep{G}$, such that $\psi_\rho:=\Model{G}(H,\rho)$ is isomorphic to $\psi$ in $\HamEx{G}$. By Theorem \ref{thm:model}\reff{thm:model:prop mor} $\psi_\rho$ is momentum proper. Hence by Theorem \ref{thm:model}\reff{thm:model:prop fct}``$\follows$'' $(H,\rho)$ is momentum proper. Therefore, $\IProp{G}(\rho):=\I{G}(\rho)$ is well-defined.

Since $M$ is contractible, the same holds for $Y_\rho$. Therefore, by the proof of \cite[7.6 Lemma]{KZ} we have $H=G$. Hence by Remark \ref{rmk:V Y rho} $\IProp{G}(\rho)$ and $\psi_\rho$ are isomorphic in $\HamEx{G}$ and hence in $\HamContrProp{G}$. It follows that $\IProp{G}(\rho)$ and $\psi$ are isomorphic in $\HamContrProp{G}$. Hence $\IProp{G}_*([\rho])=\Psi$. Thus $\IProp{G}_*$ is surjective. This proves \reff{cor:contr:surj} and completes the proof of Corollary \ref{cor:contr}.
\end{proof}
\begin{Rmks} 
\begin{itemize}
\item (This remark will be used in the next one.) We define $\SYMPREP{G}$\label{SYMPREP G} to be the category with objects the symplectic $G$-representations and morphisms between $\rho,\rho'$ given by pairs $(g,T)$, where $g\in G$ and $T:V\to V'$ is a linear symplectic map, such that \eqref{eq:T rho h} holds. The composition is defined by \eqref{eq:g' T' g T}. We define the functor\label{III G}
\[\III{G}:\SYMPREP{G}\to\SympRep{G},\quad\III{G}(\rho):=(G,\rho),\quad\III{G}=\textrm{identity on morphisms.}\]
We may view $\SYMPREP{G}$ as a full subcategory of $\SympRep{G}$ via this functor. We define the map\label{F G}
\[\F{G}:\SYMPREP{G}\to\symprep{G},\quad\F{G}=\textrm{identity on objects,}\quad\F{G}(g,T):=\rho'_{g^{-1}}T.\]
A straightforward argument shows that this map is a covariant functor.
\item Part \reff{cor:contr:inj} of Corollary \ref{cor:contr} can alternatively be deduced from Theorem \ref{thm:model}\reff{thm:model:inj} as follows. Let $R,R'$ be isomorphism classes of $\symprep{G}$ that are mapped to the same class under $\I{G}_*$. We choose representatives $\rho,\rho'$ of $R,R'$. Then $\I{G}(\rho)$ and $\I{G}(\rho')$ are isomorphic. Using Remark \ref{rmk:V Y rho}, it follows that $\Model{G}\circ\III{G}(\rho)$ and $\Model{G}\circ\III{G}(\rho')$ are isomorphic. Hence by Theorem \ref{thm:model}\reff{thm:model:inj} there exists an isomorphism $(g,T)$ in $\SympRep{G}$ from $\III{G}(\rho)=(G,\rho)$ to $\III{G}(\rho')=(G,\rho')$. It follows that $(g,T)$ is an isomorphism in $\SYMPREP{G}$ from $\rho$ to $\rho'$. Therefore, $\F{G}(g,T)=\rho'_{g^{-1}}T$ is an isomorphism in $\symprep{G}$ from $\F{G}(\rho)=\rho$ to $\F{G}(\rho')=\rho'$. It follows that $R=[\rho]=[\rho']=R'$. This shows that $\I{G}_*$ is injective, i.e., part \reff{cor:contr:inj} of Corollary \ref{cor:contr}.
\item A straightforward argument shows that the map $\II{G}{}:\rho\mapsto\II{G}{\rho}$ is a natural isomorphism between the functors $\I{G}\circ\F{G}$ and $\Model{G}\circ\III{G}$,
\[\I{G}\circ\F{G}\overset{\II{G}{}}{\underset{\simeq}{\longrightarrow}}\Model{G}\circ\III{G}.\]
This means that for every morphism $(g,T):\rho\to\rho'$ of $\SYMPREP{G}$ the diagram 
\[\label{eq:nat}\begin{CD}
\I{G}\circ\F{G}(\rho) @> \I{G}\circ\F{G}(g,T) >> \I{G}\circ\F{G}(\rho') \\
@V \II{G}{\rho} VV @VV \II{G}{\rho'} V  \\
\Model{G}\circ\III{G}(\rho) @> \Model{G}\circ\III{G}(g,T) >> \Model{G}\circ\III{G}(\rho').
\end{CD}\]
commutes, and that $\II{G}{\rho}$ is an isomorphism for every object $\rho$ of $\SYMPREP{G}$. In other words the map $\Model{G}(g,T)$ is given by
\[\Model{G}(g,T)=\F{G}(g,T)=\rho'_{g^{-1}}T:Y_\rho\to Y_{\rho'}\]
via the natural identifications $\II{G}{\rho}:V\overset{\cong}{\to}Y_\rho$ and $\II{G}{\rho'}:V'\overset{\cong}{\to}Y_{\rho'}$.
\end{itemize}
\end{Rmks}

\section{Proof of Theorem \ref{thm:model}(\ref{thm:model:well obj}-\ref{thm:model:prop fct}) %(i-vii)
(Hamiltonian $G$-model functor)}\label{sec:proof:thm:model}

For the proof of Theorem \ref{thm:model}\reff{thm:model:well obj} we need the following. We denote by $\Ad$ and $\Ad^*$\label{Ad Ad*} the adjoint and coadjoint representations of $G$. We define the map\label{mu L}
\[\muL:T^*G\to\g^*,\quad\muL(a,a\phi)=\Ad^*(a)\phi.\]
This is a momentum map for the lifted left-translation action of $G$ on $T^*G$. We denote by $\pr_1:T^*G\x V\to T^*G$\label{pr1} the canonical projection. Since left and right translations commute, $\muL$ is preserved by the lifted right translation action of $H$ on $T^*G$. Hence the map $\muL\circ\pr_1$ descends to a map\footnote{In the article \cite{KZ} we used the notation $\mu_Y$ instead of $\mu_\rho$. I am using $\mu_\rho$ here to make the dependence on $\rho$ explicit and to stay in line with the notation $Y_\rho,\om_\rho$.}\label{mu rho}
\[\mu_\rho:Y_\rho\to\g^*.\]
\begin{proof}[Proof of Theorem \ref{thm:model}\reff{thm:model:well obj}] The map $\mu_\rho$ is a momentum map for $\psi_\rho$. Hence $\psi_\rho$ is a Hamiltonian action, and therefore $\Model{G}$ is well-defined on objects, as claimed.
\end{proof}
For the proof of Theorem \ref{thm:model}\reff{thm:model:well mor} we need the following.
\begin{rmk}[product of proper maps]\label{rmk:prod proper} Let $X,Y,X',Y'$ be topological spaces, with $Y$ and $Y'$ Hausdorff. Let $f:X\to Y$ and $f':X'\to Y'$ be proper continuous maps. Then the Cartesian product map $f\x f':X\x X'\to Y\x Y'$ is proper. This follows from an elementary argument. (Hausdorffness ensures that every compact subset of $Y\x Y'$ is closed.)
\end{rmk}
\begin{proof}[Proof of Theorem \ref{thm:model}\reff{thm:model:well mor}] Let $\big(H,V,\si,\rho\big)$ and $\big(H',V',\si',\rho'\big)$ be objects of $\SympRep{G}$ and $(g,T)$ a morphism between them. We denote by $\h$ and $\h'$ the Lie algebras of $H$ and $H'$. By \eqref{eq:gHg -1} we have $c_{g^{-1}}(H')=H$. It follows that $\Ad_{g^{-1}}(\h')=\h$. Hence $\Ad^*(g)=\Ad_{g^{-1}}^*$ induces a map from $\h^*$ to ${\h'}^*$, which we again denote by $\Ad^*(g)$. We have
\begin{equation}\label{eq:Ad * g phi}\Ad^*(g)(\phi)|\h'=\Ad^*(g)(\phi|\h),\quad\forall(a,a\phi)\in T^*G.
\end{equation}
The map 
\[\rho'\circ c_g:H\to\big\{\textrm{isomorphisms of }(V',\si')\big\}\]
is a Hamiltonian action with momentum map
\[c_g^*\circ\nu_{\rho'}=\Ad_g^*\circ\nu_{\rho'}:V'\to\h^*,\]
where $\nu_{\rho'}$ is as in \eqref{eq:nu rho}. By \eqref{eq:T rho h} $\rho'$ leaves the image $T(V)$ invariant and $T$ is a symplectic embedding that is equivariant \wrt $\rho$ and $\rho'\circ c_g$.
It follows that
\begin{equation}\label{eq:Ad g * nu rho'}\Ad_g^*\circ\nu_{\rho'}\circ T=\nu_\rho.\end{equation}
(Here we use that both sides vanish at $v=0\in V$.) For every $(a,a\phi,v)\in T^*G\x V$ we have
\begin{align*}\muD_{\rho'}\circ\left(R^{g^{-1}}_*\x T\right)(a,a\phi,v)&=\muD_{\rho'}\big(ag^{-1},ag^{-1}\Ad^*(g)(\phi),Tv\big)\\
&=-\Ad^*(g)(\phi)|\h'+\nu_{\rho'}(Tv)\\
&=\Ad^*(g)\big(-\phi|\h+\nu_\rho(v)\big)\qquad\textrm{(using (\ref{eq:Ad * g phi},\ref{eq:Ad g * nu rho'}))}\\
&=\Ad^*(g)\circ\muD_\rho(a,a\phi,v).
\end{align*}
The claimed inclusion \eqref{eq:R g -1 * T} follows. We define
\[\wt\Phi:=R^{g^{-1}}_*\x T:(\muD_\rho)^{-1}(0)\to(\muD_{\rho'})^{-1}(0).\]
Let $h\in H$. By \eqref{eq:gHg -1} we have $h':=c_g(h)\in H'$. By \eqref{eq:T rho h} the map $\wt\Phi$ intertwines the diagonal action of $h$ on $T^*G\x V$ with the diagonal action of $h'$ on $T^*G\x V'$. It follows that the right hand side of \eqref{eq:Model G} does not depend on the choice of the representative $(a,a\phi,v)$, as claimed. We denote by 
\[\Phi:=\Model{G}(g,T):Y_\rho\to Y_{\rho'},\]
the map induced by $\wt\Phi$. We show that $\Phi$ is a morphism of $\HamEx{G}$. The map $\wt\Phi$ is smooth, presymplectic, and equivariant \wrt the $G$-actions induced by the left translations on $G$. It follows that $\Phi$ is smooth, symplectic, and equivariant \wrt to the $G$-actions $\psi_\rho$ and $\psi_{\rho'}$.

\begin{claim}\label{claim:T Phi proper} The maps $T$ and $\Phi$ are proper.
\end{claim}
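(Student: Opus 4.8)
\emph{Plan.} The idea is to prove properness of $T$ directly from linear algebra, and then obtain properness of $\Phi$ by transporting properness of $\wt\Phi$ across the quotient maps defining $Y_\rho$ and $Y_{\rho'}$.

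\emph{Properness of $T$.} Being a linear symplectic map, $T$ is injective: if $Tv=0$ then $\si(v,w)=\si'(Tv,Tw)=0$ for all $w\in V$, hence $v=0$. Thus $T(V)$ is a linear, and therefore closed, subspace of $V'$, and $T$ factors as the linear isomorphism $V\to T(V)$ (a homeomorphism, hence proper) followed by the inclusion $T(V)\inj V'$ of a closed subspace (proper). So $T$ is proper.

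\emph{Properness of $\wt\Phi$.} The map $R^{g^{-1}}_*$ is a diffeomorphism of $T^*G$, hence proper. Combining this with properness of $T$ and Remark~\ref{rmk:prod proper}, the product $R^{g^{-1}}_*\x T:T^*G\x V\to T^*G\x V'$ is proper. Now restrict its domain to the closed subset $(\muD_\rho)^{-1}(0)\sub T^*G\x V$ and, using the inclusion \eqref{eq:R g -1 * T}, its codomain to the closed subset $(\muD_{\rho'})^{-1}(0)\sub T^*G\x V'$; since restricting a proper map to a closed subset of the domain (and to a subspace of the codomain containing the image) preserves properness, $\wt\Phi$ is proper.

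\emph{Properness of $\Phi$.} Since $G$ is compact, so are the closed subgroups $H$ and $H'$, and their diagonal actions on $(\muD_\rho)^{-1}(0)$ and $(\muD_{\rho'})^{-1}(0)$ are free and proper. Hence the quotient maps $\pi:(\muD_\rho)^{-1}(0)\to Y_\rho$ and $\pi':(\muD_{\rho'})^{-1}(0)\to Y_{\rho'}$ are locally trivial principal bundle projections with compact structure group; as properness is local on the base, $\pi$ and $\pi'$ are proper (and surjective). By construction $\Phi\circ\pi=\pi'\circ\wt\Phi$, a composition of proper maps, hence proper. Finally, for any compact $K\sub Y_{\rho'}$ the set $\pi^{-1}\big(\Phi^{-1}(K)\big)=(\Phi\circ\pi)^{-1}(K)$ is compact, and since $\pi$ is surjective, $\Phi^{-1}(K)=\pi\big(\pi^{-1}(\Phi^{-1}(K))\big)$ is compact as well. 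Therefore $\Phi$ is proper.

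I expect the only point requiring care to be the descent to the quotient: one must know that a free and proper action of a \emph{compact} group has a proper quotient map, and must keep track of the elementary but fiddly stability properties of proper maps (restriction to closed subsets, composition, and pushforward along a continuous surjection). Everything else is immediate.
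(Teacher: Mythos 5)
Your proposal is correct and follows essentially the same route as the paper: injectivity of the linear symplectic map $T$ gives its properness, the product $R^{g^{-1}}_*\x T$ is proper by Remark~\ref{rmk:prod proper} and restricts to a proper $\wt\Phi$, and properness of $\Phi$ is deduced from the identity $\Phi\circ\pi=\pi'\circ\wt\Phi$ together with properness of the quotient projections coming from compactness of $H'$. The only cosmetic differences are that the paper proves properness of $T$ via the norm estimate $\sup_{0\neq v}\Vert v\Vert/\Vert Tv\Vert'<\infty$ rather than by factoring through the closed image, and it chases the preimage identity directly instead of first composing the proper maps; both variants are sound.
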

\begin{proof}[Proof of Claim \ref{claim:T Phi proper}] The map $T:V\to V'$ is linear symplectic and hence injective. Since $V$ is finite-dimensional, it follows that
\[\sup_{0\neq v\in V}\frac{\Vert v\Vert}{\Vert Tv\Vert'}<\infty,\]
where $\Vert\cdot\Vert,\Vert\cdot\Vert'$ are arbitrary norms on $V,V'$. This implies that $T$ is proper, as claimed.

We denote by
\begin{equation}\label{eq:pi rho}\pi_\rho:(\muD_\rho)^{-1}(0)\to Y_\rho=(\muD_\rho)^{-1}(0)/\psiD_\rho\end{equation}
the canonical projection. Let $K'\sub Y_{\rho'}$ be a compact subset. Since $\Phi\circ\pi_\rho=\pi_{\rho'}\circ\wt\Phi$, we have
\begin{equation}\label{eq:pi Phi}\pi_\rho^{-1}\circ\Phi^{-1}(K')=\wt\Phi^{-1}\circ\pi_{\rho'}^{-1}(K').\end{equation}
The projection $\pi_{\rho'}$ is proper, since $H'$ is compact. It follows that $\pi_{\rho'}^{-1}(K')$ is compact. The map $R^{g^{-1}}_*:T^*G\to T^*G$ is proper, since it is invertible with continuous inverse. Using Remark \ref{rmk:prod proper} and properness of $T$, it follows that the Cartesian product map $R^{g^{-1}}_*\x T:T^*G\x V\to T^*G\x V'$ is proper. Since this map restricts to $\wt\Phi$ on $(\muD_\rho)^{-1}(0)$, it follows that $\wt\Phi$ is proper. Since $\pi_{\rho'}^{-1}(K')$ is compact, it follows that the right hand side of \eqref{eq:pi Phi} is compact, hence also the left hand side. Since $\pi_\rho$ maps this set to $\Phi^{-1}(K')$, it follows that $\Phi^{-1}(K')$ is compact. This proves Claim \ref{claim:T Phi proper}.
\end{proof}

Using Claim \ref{claim:T Phi proper}, it follows that $\Phi$ is a $G$-equivariant proper symplectic embedding, i.e., a morphism of $\HamEx{G}$. This proves that the map $\Model{G}$ is well-defined on morphisms. This completes the proof of Theorem \ref{thm:model}\reff{thm:model:well mor}.
\end{proof}

\begin{proof}[Proof of Theorem \ref{thm:model}\reff{thm:model:funct}] It follows from a straightforward argument that $\Model{G}$ maps the unit morphisms to unit morphisms and intertwines the compositions. Hence it is a covariant functor. This proves Theorem \ref{thm:model}\reff{thm:model:funct}.
\end{proof}

For the proof of Theorem \ref{thm:model}\reff{thm:model:inj} we need the following. Let $G$ be a group, $X$ a set, $\psi$ an action of $G$ on $X$, and $x\in X$. We denote by\label{G x}
\[G_x:=\Stab{\psi}{x}:=\big\{g\in G\,\big|\,\psi_g(x)=x\big\}\]
the stabilizer of $x$ under $\psi$.
\begin{rmk}\label{rmk:stab} Let $G$ be a Lie group, $(\rho,H)$ an object of $\SympRep{G}$, and $y=[a,a\phi,v]\in Y_\rho$. Then
\[G_y=\big\{c_a(h)\,\big|\,h\in H:\,\rho_hv=v\big\}.\]
\end{rmk}

\begin{lemma}\label{le:N conj N'} Let $G$ be a topological (finite-dimensional) manifold with a continuous group structure, $N,N'$ compact submanifolds of $G$, and $g\in G$, such that
\begin{equation}\label{eq:g N g -1 N'}c_g(N)\sub N',\end{equation}
and $N'$ is conjugate to some subset of $N$. Then we have
\[c_g(N)=N'.\]
\end{lemma}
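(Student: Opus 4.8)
The plan is to argue purely topologically, using three ingredients: every conjugation $c_a:G\to G$ is a homeomorphism (continuous with continuous inverse $c_{a^{-1}}$); covering dimension is monotone under passing to subspaces and equals $n$ on a topological $n$-manifold; and invariance of domain. First I would unwind the hypotheses: we are given $c_g(N)\sub N'$, and ``$N'$ is conjugate to some subset of $N$'' means there is $g'\in G$ with $c_{g'}(N')\sub N$. Since $c_g$ and $c_{g'}$ are self-homeomorphisms of $G$, the sets $c_g(N)$ and $c_{g'}(N')$ are compact topological submanifolds of $G$, homeomorphic to $N$ and $N'$ respectively; in particular $\dim c_g(N)=\dim N$ and $\dim c_{g'}(N')=\dim N'$.

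Next I would compare dimensions. All spaces involved are compact metrizable, hence separable metric, so covering dimension is monotone under subspaces. From $c_g(N)\sub N'$ we get $\dim N=\dim c_g(N)\le\dim N'$, and symmetrically from $c_{g'}(N')\sub N$ we get $\dim N'=\dim c_{g'}(N')\le\dim N$. Since the covering dimension of a topological $m$-manifold is $m$, this forces $N$ and $N'$ to have a common dimension $n$.

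Then I would apply invariance of domain to the inclusion $\iota:c_g(N)\hookrightarrow N'$, a continuous injection between $n$-manifolds of equal dimension: it is an open map, so $c_g(N)$ is open in $N'$; being compact it is also closed in $N'$, hence clopen, hence a union of connected components of $N'$. A compact manifold has only finitely many components; write $k(\cdot)$ for their number. Since $c_g(N)\cong N$ we obtain $k(N)=k(c_g(N))\le k(N')$, and the symmetric argument applied to $c_{g'}$ gives $k(N')\le k(N)$. Therefore $k(c_g(N))=k(N')$, and since $c_g(N)$ is a union of components of $N'$ that accounts for all of them, $c_g(N)=N'$.

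\textbf{Main obstacle.} The genuine subtlety is the absence of any connectedness assumption on $N$ and $N'$: invariance of domain alone only yields that $c_g(N)$ is clopen in $N'$, which is not enough, and one must combine the two-sided inclusion with the finiteness of the number of connected components of a compact manifold to conclude. A secondary point is that we work in the purely topological category, so smooth submersion or transversality arguments are unavailable; invariance of domain and the subspace theorem of dimension theory are the right substitutes, and one should note that only the compact sets $N,N'$ and their subsets need be separable metric, so no global countability hypothesis on $G$ is required.
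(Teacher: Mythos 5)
Your proof is correct, but it is organized differently from the paper's. The paper composes the two conjugations into a single map $\psi:=c_{g'g}$, which satisfies $\psi(N)\subseteq N$, and applies invariance of domain to the self-map $\psi|_N:N\to N$; since source and target are literally the same manifold, no comparison of $\dim N$ with $\dim N'$ is ever needed. It then observes that the resulting injection of the finite set of connected components of $N$ into itself is a bijection, so $N\subseteq\psi(N)$, and the conclusion follows by unwinding $\psi$. You instead apply invariance of domain directly to the inclusions $c_g(N)\hookrightarrow N'$ and $c_{g'}(N')\hookrightarrow N$, which forces you to first establish $\dim N=\dim N'$; you do this correctly via subspace monotonicity of covering dimension for separable metric spaces together with the fact that an $n$-manifold has covering dimension $n$ (and your remark that only $N$ and $N'$ need be separable metric is right, since compact manifolds are metrizable). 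After that, both arguments rest on the same two facts: a compact clopen subset of a manifold is a union of connected components, and a compact manifold has finitely many components, so a two-sided count forces equality. The paper's composition trick buys independence from any dimension-theoretic input; your version is more symmetric and makes explicit where equality of dimensions comes from, at the price of importing the subspace theorem of dimension theory (or some substitute for it). Your identification of the disconnected case as the genuine obstacle matches exactly the role the component count plays in the paper's proof.
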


In the proof of this lemma we will use the following.
\begin{rmk}[invariance of domain]\label{rmk:inv dom} Let $M$ and $N$ be topological manifolds of the same finite dimension, without boundary. Then every continuous injective map from $M$ to $N$ is open. In the case $M=N=\R^n$ this is the statement of the Invariance of Domain Theorem, see \cite[Theorem 2B.3, p.~172]{Ha}. The general situation can be reduced to this case.
\end{rmk}

\begin{proof}[Proof of Lemma \ref{le:N conj N'}] We choose $g'\in G$, such that
\begin{equation}\label{eq:g'N'g'}c_{g'}(N')\sub N,\end{equation}
and define $\psi:=c_{g'g}:G\to G$. We have
\[\psi(N)=c_{g'}\circ c_g(N)\sub N.\]
Let $A$ be a connected component of $N$. Since $N$ is a submanifold of $G$, the set $A$ is open in $N$. The map $\psi$ is bijective and continuous. Hence by Remark \ref{rmk:inv dom} the restriction $\psi:N\to N$ is open. Thus $\psi(A)$ is open in $N$. 

Since $A$ is a connected component of $N$, it is closed in $N$. Since $N$ is compact, it follows that $A$ is compact. Therefore, $\psi(A)$ is compact and hence a closed subset of $N$. It follows that $\psi(A)$ is a connected component of $N$. Hence the map
\begin{equation}\label{eq:conn comp}\big\{\textrm{connected component of }N\big\}\ni A\mapsto\psi(A)\in\big\{\textrm{connected component of }N\big\}\end{equation}
is well-defined. This map is injective. Since $N$ is compact, the number of its connected components is finite. It follows that the map \eqref{eq:conn comp} is surjective. It follows that $N\sub\psi(N)$, and therefore, $c_{g'}^{-1}(N)\sub c_g(N)$. By \eqref{eq:g'N'g'} we have $N'\sub c_{g'}^{-1}(N)$. It follows that $N'\sub c_g(N)$. Combining this with \eqref{eq:g N g -1 N'}, it follows that $c_g(N)=N'$. This proves Lemma \ref{le:N conj N'}.
\end{proof}

Let $G$ be a Lie group, $(M,\om,\psi)$ a symplectic $G$-action, and $x\in M$.
\begin{Rmk} The isotropy representation of $\psi$ at $x$ is by definition the map\label{isotr psi x}
\[\isotr{\psi}{x}:\Stab{\psi}{x}\x T_xM\to T_xM,\quad(g,v)\mapsto d\psi_g(x)v.\]
This is a symplectic representation of the isotropy group $\Stab{\psi}{x}$.
\end{Rmk}
In order to define the symplectic quotient representation of $\psi$ at $x$, we need the following remarks. 
\begin{rmks}[symplectic quotient representation]\label{rmk:lin quot}\begin{enui}
\item\label{rmk:lin quot:d psi g im L x} Let $G$ be a Lie group, $(M,\psi)$ a $G$-action on a manifold, and $x\in M$. We denote by $\g$ the Lie algebra of $G$ and by
\begin{equation}\label{eq:L psi x}L_x:=L^\psi_x:\g\to T_xM\end{equation}
the infinitesimal action at $x$. The equality
\[d\psi_g(x)(\im L_x)=\im L_{\psi_g(x)}\]
holds.
\item\label{rmk:lin quot:quot} Let $(V,\si)$ be a symplectic vector space and $W\sub V$ a linear space. We denote by\label{W si}
\[W^\si:=\big\{v\in V\,\big|\,\si(v,w)=0,\,\forall w\in W\big\}\]
the symplectic complement of $W$. Let $(M,\om,\psi)$ be a symplectic $G$-action and $x\in M$. The form $\om_x$ induces a linear symplectic form $\BAR\om_x$\label{BAR om x} on the quotient space
\begin{equation}\label{eq:V psi x}V^\psi_x:=(\im L_x)^{\om_x}/\big(\im L_x\cap(\im L_x)^{\om_x}\big).\end{equation}
It follows from \reff{rmk:lin quot:d psi g im L x} that $d\psi_g(x)\left((\im L_x)^{\om_x}\right)=(\im L_{\psi_g(x)})^{\om_{\psi_g(x)}}$. Therefore, using \reff{rmk:lin quot:d psi g im L x} again, $d\psi_g(x)$ induces a map
\begin{equation}\label{eq:V x}V^\psi_x\to V^\psi_{\psi_g(x)}.\end{equation}
This map is a linear symplectic isomorphism \wrt $\BAR\om_x$ and $\BAR\om_{\psi_g(x)}$. 
\end{enui}
\end{rmks}
We define the \emph{symplectic quotient representation of $\psi$ at $x$} to be the map\label{quot psi x}
\begin{equation}\label{eq:quot}\quot{\psi}{x}:\Stab{\psi}{x}\x V^\psi_x\to V^\psi_x,\end{equation}
where $\quot{\psi}{x}(g,\cdot)$ is given by the map \eqref{eq:V x}. By Remark \ref{rmk:lin quot}\reff{rmk:lin quot:quot} this is a well-defined symplectic representation of $\Stab{\psi}{x}$ on the linear symplectic quotient $V^\psi_x$ of $(\im L_x)^{\om_x}$.\footnote{In the literature $\quot{\psi}{x}$ is called ``symplectic slice representation''. This terminology seems misleading, since $\quot{\psi}{x}$ does not involve any choice of a local slice.} %end footnote
\begin{rmks}[equivariant symplectomorphism, symplectic quotient representations]\label{rmk:sympl quot Phi} Let $G$ be a Lie group, $(M,\om,\psi),(M',\om',\psi')$ symplectic $G$-actions, $\Phi:M\to M'$ a $G$-equivariant symplectomorphism, $x\in M$, and $x':=\Phi(x)$.
\begin{enui}\item\label{rmk:sympl quot Phi:quot} Since $\Phi$ is $G$-equivariant and injective, we have
\[\Stab{\psi}{x}=\Stab{\psi'}{x'}.\]
Furthermore, we have $d\Phi(x)L^\psi_x=L^{\psi'}_{x'}$, and therefore, $d\Phi(x)\big(\im L^\psi_x\big)=\im L^{\psi'}_{x'}$. Since $\Phi$ is symplectic, it follows that $d\Phi(x)$ induces a map
\[V^\psi_x\to V^{\psi'}_{x'}.\]
This map is a linear symplectic isomorphism that intertwines $\quot{\psi}{x}$ and $\quot{\psi'}{x'}$.
\item\label{rmk:sympl quot Phi:Ham} If $\psi'$ is Hamiltonian with momentum map $\mu'$ then $\mu'\circ\Phi$ is a momentum map for $\psi$.
\end{enui}
\end{rmks}
\begin{lemma}[symplectic quotient representation for model action]\label{le:A y} Let $G$ be a compact Lie group and $(H,V,\si,\rho)$ an object of $\SympRep{G}$. We denote
\[\big(Y_\rho,\om_\rho,\psi_\rho\big):=\Model{G}(H,\rho).\]
Let $y\in Y_\rho$ be a point for which $\mu_\rho(y)$ is central and $\Stab{\psi_\rho}{y}=c_a(H)$, for some representative $(a,a\phi,v)$ of $y$. Then $\rho=(H,\rho)$ is isomorphic to $\quot{\psi_\rho}{y}$.
\end{lemma}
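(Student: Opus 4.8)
The plan is to exploit the symmetries of the model $Y_\rho$ to transport $y$ to the canonical base point $[e,0,0]$, and then to compute the symplectic slice representation there directly. Writing the given representative as $(a,a\phi,v)$, I would first apply the automorphism $\psi_\rho(a^{-1})$ of $(Y_\rho,\om_\rho,\psi_\rho)$; by Remark \ref{rmk:lin quot}\reff{rmk:lin quot:quot} — together with the standard, equivariance-based fact that $d\psi_\rho(a^{-1})$ intertwines slice representations through the conjugation $c_{a^{-1}}$ — this lets us assume $y=[e,\phi,v]$. Then $\Stab{\psi_\rho}{y}=c_e(H)=H$ and $\mu_\rho(y)=\Ad^*(e)\phi=\phi$, which is central by hypothesis, so $\phi\in(\g^*)^G$; and Remark \ref{rmk:stab} turns the identity $H=\Stab{\psi_\rho}{y}=\{h\in H:\rho_hv=v\}$ into the assertion that $v$ is $\rho$-fixed. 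Hence the quadratic momentum map $\nu_\rho$ of \eqref{eq:nu rho} satisfies $\nu_\rho(v)=0$, the defining constraint $-\phi|\h+\nu_\rho(v)=0$ of $(\muD_\rho)^{-1}(0)$ forces $\phi|\h=0$, and moreover $\nu_\rho(w+v)=\nu_\rho(w)$ for all $w\in V$ (because $\rho'_\xi v=0$ and $\si(\rho'_\xi w,v)=-\si(w,\rho'_\xi v)=0$ for $\xi\in\h$).

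Next I would move $y$ to the base point $y_0:=[e,0,0]$ by a translation. Consider $\wt\Theta:T^*G\x V\to T^*G\x V$, $\wt\Theta(a,a\mu,w):=(a,a(\mu+\phi),w+v)$. Using $\phi|\h=0$, the identity $\nu_\rho(w+v)=\nu_\rho(w)$, the $\Ad^*$-invariance of $\phi$, and the fact that $v$ is $\rho$-fixed, one checks that $\wt\Theta$ restricts to an $H$-equivariant bijection of $(\muD_\rho)^{-1}(0)$; and $\wt\Theta$ is symplectic, because on the $T^*G$-factor it is fibrewise translation by the left-invariant $1$-form extending $\phi$ — closed, since $\Ad^*$-invariance of $\phi$ forces $\phi([\g,\g])=0$ — and on the $V$-factor it is an ordinary translation. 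It also commutes with left translations, hence is $G$-equivariant. Thus $\wt\Theta$ descends to a $G$-equivariant symplectomorphism $\Theta$ of $Y_\rho$ with $\Theta(y_0)=y$, and Remark \ref{rmk:sympl slice Phi}\reff{rmk:sympl slice Phi:slice} shows $\slice{\psi_\rho}{y}\iso\slice{\psi_\rho}{y_0}$ in $\SympRep{G}$. It therefore remains to prove $\slice{\psi_\rho}{y_0}\iso(H,\rho)$.

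This last step is the core computation. Since $\nu_\rho$ vanishes to second order at $0$, the tangent space to $(\muD_\rho)^{-1}(0)$ at $(e,0,0)$ is $\g\oplus\h^{\circ}\oplus V$, with $\h^{\circ}\sub\g^*$ the annihilator of $\h$, and the kernel of $d\pi_\rho(e,0,0)$ is the tangent $\h\oplus0\oplus0$ to the $\psiD_\rho$-orbit; hence $T_{y_0}Y_\rho\iso(\g/\h)\oplus\h^{\circ}\oplus V$. Lifting the $G$-orbit through $y_0$ to $t\mapsto(\exp t\xi,0,0)$ gives $\im L^{\psi_\rho}_{y_0}=(\g/\h)\oplus0\oplus0$, and since $\muD_\rho(e,0,0)=0$ the pull-back of $\om_\rho$ to $T_{y_0}Y_\rho$ is, up to sign, $\big(([X_1],\beta_1,w_1),([X_2],\beta_2,w_2)\big)\mapsto\langle\beta_1,X_2\rangle-\langle\beta_2,X_1\rangle+\si(w_1,w_2)$, i.e. the canonical pairing of $\g/\h$ with $\h^{\circ}=(\g/\h)^*$ plus $\si$. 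Consequently $(\im L^{\psi_\rho}_{y_0})^{\om_\rho}=(\g/\h)\oplus0\oplus V$, its intersection with $\im L^{\psi_\rho}_{y_0}$ equals $\im L^{\psi_\rho}_{y_0}$, and the quotient $V^{\psi_\rho}_{y_0}$ is identified symplectically with $(V,\si)$ via $[([X],0,w)]\mapsto w$. Finally, for $h\in H=\Stab{\psi_\rho}{y_0}$ I would compute $d\psi_\rho(h)(y_0)$ on this quotient by post-composing the lift of $\psi_\rho(h)$ (left translation by $h$) with the $\psiD_\rho$-reparametrisation carrying $(h,0,0)$ back to $(e,0,0)$: the resulting map $F_h$ fixes $(e,0,0)$, satisfies $\pi_\rho\circ F_h=\psi_\rho(h)\circ\pi_\rho$, and acts at $(e,0,0)$ by $\Ad(h)$, $\Ad^*(h)$, $\rho_h$ on the summands $\g$, $\h^{\circ}$, $V$ (the conventions on the first two summands being irrelevant, as they are killed when one restricts to $(\im L^{\psi_\rho}_{y_0})^{\om_\rho}$ and passes to the quotient). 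Hence $h$ acts on $V^{\psi_\rho}_{y_0}\iso V$ by $\rho_h$, so $\slice{\psi_\rho}{y_0}=(H,V,\si,\rho)=(H,\rho)$, and combining with the two reductions yields $(H,\rho)\iso\slice{\psi_\rho}{y}$.

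I expect the main obstacle to be this last computation: correctly identifying $T_{y_0}Y_\rho$ and the reduced symplectic form on it in left-trivialised coordinates, and — above all — the $\Stab{\psi_\rho}{y_0}$-action, where the key device is the auxiliary map $F_h$ that restores the chosen representative $(e,0,0)$ after left translation by $h$. The two reduction steps are essentially bookkeeping; the only point there that needs care is that fibrewise translation on $T^*G$ by the left-invariant extension of a central covector is a symplectomorphism, which hinges on $\phi([\g,\g])=0$.
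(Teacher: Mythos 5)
Your proof is correct, but it takes a genuinely different route from the paper's. The paper works directly at the given point $y$ with its representative $(a,a\phi,v)$: after extracting the same two consequences of the hypotheses that you do (that $\phi$ is a central element of $\g^*$ and that $v$ is a $\rho$-fixed point), it shows that the derivative of the fibre inclusion $w\mapsto(a,a\phi,w)$ descends to a well-defined linear symplectic map $A^\rho_{\wt y}:V\to T_yY_\rho$ which, paired with $a$, is a morphism from $\rho$ to the isotropy representation at $y$; it then checks $\im A^\rho_{\wt y}\sub\big(\im L^{\psi_\rho}_y\big)^{(\om_\rho)_y}$ and proves that the projection of $\im A^\rho_{\wt y}$ onto the symplectic slice is an isomorphism by a dimension count. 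You instead normalize $y$ to the base point $[e,0,0]$ --- first by the $G$-action, then by the fibrewise translation $\wt\Theta$, whose legitimacy rests on the extra observations $\phi|\h=0$, $\nu_\rho(\cdot+v)=\nu_\rho$, and the closedness of the left-invariant extension of the central covector $\phi$ --- and then compute the slice representation at $[e,0,0]$ explicitly in left-trivialized coordinates. What your route buys is an explicit identification of $V^{\psi_\rho}_{[e,0,0]}$ with $(V,\si)$ and of the stabilizer action with $\rho$, so no dimension count is needed; what it costs is the verification that $\wt\Theta$ is a $G$- and $H$-equivariant symplectomorphism preserving $(\muD_\rho)^{-1}(0)$ and the computation of the reduced form at the base point, none of which the paper requires. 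Your auxiliary map $F_h$, which restores the chosen representative after left translation, is essentially the mechanism hidden in the paper's claim that $\big(a,A^\rho_{\wt y}\big)$ intertwines $\rho$ with the isotropy representation; the two arguments are built on the same two facts but package them differently.
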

\begin{Rmk} The subgroup $c_a(H)$ does not depend on the choice of the representative $(a,a\phi,v)$ of $y$.
\end{Rmk}
In the proof of this lemma we will use the following.
\begin{rmk}[momentum map]\label{rmk:ker d mu} Let $(M,\om,\psi)$ be a Hamiltonian $G$-action, $\mu$ a momentum map for $\psi$, and $x\in M$. Then
\[\ker d\mu(x)=(\im L^\psi_x)^{\om_x}.\]
\end{rmk}
\begin{proof}[Proof of Lemma \ref{le:A y}] We choose a representative $\wt y:=(a,a\phi,v)$ of $y$. We define\label{iota a phi}
\[\iota_{a,\phi}:V\to T^*G\x V,\quad\iota_{a,\phi}(w):=(a,a\phi,w).\]
\begin{claim}\label{claim:im d iota phi}
\begin{equation}\label{eq:im d iota phi}\im\big(d\iota_{a,\phi}(v)\big)\sub\ker d\muD_\rho(\wt y).\end{equation}
\end{claim}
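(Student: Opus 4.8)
The plan is to reduce \eqref{eq:im d iota phi} to the single statement $d\nu_\rho(v)=0$ and then to deduce the latter from the stabilizer hypothesis of Lemma \ref{le:A y}. First observe that $\iota_{a,\phi}$ is constant in the $T^*G$-factor, so $d\iota_{a,\phi}(v)$ maps $T_vV$ onto the ``vertical'' subspace $\{0\}\x T_vV$ of $T_{\wt y}(T^*G\x V)$. Hence $\im\big(d\iota_{a,\phi}(v)\big)\sub\ker d\muD_\rho(\wt y)$ holds if and only if the composite $d\muD_\rho(\wt y)\circ d\iota_{a,\phi}(v)=d\big(\muD_\rho\circ\iota_{a,\phi}\big)(v)$ vanishes. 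By the definition of $\muD_\rho$ we have $\muD_\rho\circ\iota_{a,\phi}(w)=-\phi|\h+\nu_\rho(w)$ for every $w\in V$, so this composite equals $d\nu_\rho(v)$, and the claim becomes $d\nu_\rho(v)=0$.

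To obtain this I would first show that $v$ is a fixed vector of $\rho$. Taking $\wt y=(a,a\phi,v)$ to be the representative of $y$ furnished by the hypothesis of Lemma \ref{le:A y}, Remark \ref{rmk:stab} gives $\Stab{\psi_\rho}{y}=\{c_a(h)\mid h\in H,\ \rho_hv=v\}$, and by hypothesis this set equals $c_a(H)$. Since $c_a:G\to G$ is a bijection, it restricts to an injection on $H$, and therefore $\rho_hv=v$ for every $h\in H$. In particular the infinitesimal action $L^\rho_v:\h\to V$ vanishes, so $\im L^\rho_v=\{0\}$. Now I would apply Remark \ref{rmk:ker d mu} to the Hamiltonian $H$-action $(V,\si,\rho)$, whose momentum map is $\nu_\rho$, at the point $v$: it yields $\ker d\nu_\rho(v)=(\im L^\rho_v)^{\si}=\{0\}^{\si}=V$, hence $d\nu_\rho(v)=0$. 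Combined with the first paragraph this is exactly \eqref{eq:im d iota phi}. (Centrality of $\mu_\rho(y)$, the other hypothesis of Lemma \ref{le:A y}, is not needed here.)

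The differential computations are entirely routine, so the only step with real content is the passage from $\Stab{\psi_\rho}{y}=c_a(H)$ to ``$v$ is $\rho$-fixed'', which is pure bookkeeping via Remark \ref{rmk:stab}, together with the standard fact — encoded in Remark \ref{rmk:ker d mu}, equivalently in the observation that $\nu_\rho$ is a quadratic map whose differential vanishes at every fixed vector of $\rho$ — that a momentum map of a symplectic representation is critical at its fixed points. I expect this (modest) step to be the only real point; everything else is immediate.
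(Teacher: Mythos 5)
Your proof is correct, and its skeleton is the same as the paper's: both arguments reduce \eqref{eq:im d iota phi} to the identity $d\big(\muD_\rho\circ\iota_{a,\phi}\big)(v)=d\nu_\rho(v)=0$ and obtain the vanishing by showing that $v$ is a fixed point of $\rho$. The difference lies in how the fixed-point property is established. The paper first uses the centrality of $\mu_\rho(y)$ to deduce that $\phi$ is a central element of $\g^*$, and then computes $y=\psi_\rho(c_a(h),y)=[ah,a\phi h,v]=[a,a\phi,\rho_hv]$ directly in the quotient, the middle step requiring $\phi$ central. You instead read the fixed-point property off Remark \ref{rmk:stab}: from $c_a(H)=\Stab{\psi_\rho}{y}\sub\big\{c_a(h)\,\big|\,h\in H:\,\rho_hv=v\big\}$ and the injectivity of $c_a$ you get $\rho_hv=v$ for every $h\in H$. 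This is sound; note that you only need the inclusion ``$\sub$'' of Remark \ref{rmk:stab}, which holds for arbitrary $\phi$ (the reverse inclusion is the delicate direction, since a stabilizing element $c_a(h)$ must in addition satisfy a coadjoint-invariance condition on $\phi$), and it vindicates your parenthetical observation that centrality of $\mu_\rho(y)$ is not needed for this particular claim. What your route gives up is that the paper's detour through ``$\phi$ is central'' is not wasted work: that fact is reused later in the proof of Lemma \ref{le:A y} (in Claim \ref{claim:A rho wt y}), so it has to be established somewhere regardless. Finally, your derivation of $d\nu_\rho(v)=0$ by applying Remark \ref{rmk:ker d mu} to the Hamiltonian $H$-action $(V,\si,\rho)$ with momentum map $\nu_\rho$ at the fixed point $v$ is a clean way of making precise what the paper simply asserts.
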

\begin{proof}[Proof of Claim \ref{claim:im d iota phi}] By our hypothesis $\mu_\rho(y)=\muL(\wt y)=\Ad^*(a)\phi$ is a central element of $\g^*$. Hence, for every $g\in G$, we have
\[\Ad^*(a)\phi=\Ad^*(c_a(g))\Ad^*(a)\phi=\Ad^*(a)\Ad^*(g)\phi,\]
and therefore $\phi=\Ad^*(g)\phi$. Hence $\phi$ is a central element of $\g^*$. For every $h\in H$, we have
\begin{align*}[a,a\phi,v]&=y\\
&=\psi_\rho(c_a(h),y)\qquad\textrm{(since $\Stab{\psi_\rho}{y}=c_a(H)$)}\\
&=\big[c_a(h)a,c_a(h)a\phi,v\big]\\
&=\big[ah,a\phi h,v\big]\qquad\textrm{(using that $\phi$ is central)}\\
&=\big[a,a\phi,\rho_hv\big],
\end{align*}
and therefore $\rho_hv=v$. Hence $v$ is a fixed point of $\rho$. It follows that $d\nu_\rho(v)=0$. Since $\muD_\rho(a,a\phi,w)=-\phi|\h+\nu_\rho(w)$, it follows that 
\[d(\muD_\rho\circ\iota_{a,\phi})(v)=d\nu_\rho(v)=0.\]
The inclusion \eqref{eq:im d iota phi} follows. This proves Claim \ref{claim:im d iota phi}.
\end{proof}
We define $\pi_\rho$ as in \eqref{eq:pi rho}, and
\[A^\rho_{\wt y}:=d\pi_\rho(\wt y)d\iota_{a,\phi}(v):V\to T_yY_\rho.\]
By Claim \ref{claim:im d iota phi} this map is well-defined.
\begin{claim}\label{claim:A rho wt y} The pair $\big(a,A^\rho_{\wt y}\big)$ is a morphism from $\rho$ to $\isotr{\psi_\rho}{y}$ (the isotropy representation of $\psi_\rho$ at $y$).
\end{claim}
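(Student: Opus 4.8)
\emph{Proof proposal.} The plan is to check directly that the pair $\big(a,A^\rho_{\wt y}\big)$ satisfies the two defining conditions of a morphism of $\SympRep{G}$ from $(H,V,\si,\rho)$ to $\isotr{\psi_\rho}{y}$, the latter regarded as an object of $\SympRep{G}$ with group $\Stab{\psi_\rho}{y}=c_a(H)$ and symplectic vector space $\big(T_yY_\rho,(\om_\rho)_y\big)$. Throughout I use the conclusions obtained in the proof of Claim \ref{claim:im d iota phi}: since $\mu_\rho(y)$ is central, the covector $\phi$ is a central element of $\g^*$, the vector $v$ is fixed by $\rho$, and $\im\big(d\iota_{a,\phi}(v)\big)\sub\ker d\muD_\rho(\wt y)$, so that $A^\rho_{\wt y}=d\pi_\rho(\wt y)\,d\iota_{a,\phi}(v)$ is well-defined. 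Condition \eqref{eq:gHg -1} for $\big(a,A^\rho_{\wt y}\big)$ is the equality $c_a(H)=\Stab{\psi_\rho}{y}$, which is the hypothesis of Lemma \ref{le:A y}. As for linear symplecticity: since $(Y_\rho,\om_\rho)$ is the symplectic quotient of $\psiD_\rho$, the form $\pi_\rho^*\om_\rho$ is the pullback to $(\muD_\rho)^{-1}(0)$ of the product symplectic form on $T^*G\x V$, whose restriction to the $V$-factor is $\si$. Because $d\iota_{a,\phi}(v)w$ is the tangent vector at $\wt y$ with vanishing $T^*G$-component and $V$-component $w$, we obtain $(\om_\rho)_y\big(A^\rho_{\wt y}w_1,A^\rho_{\wt y}w_2\big)=\si(w_1,w_2)$ for all $w_1,w_2\in V$, so $A^\rho_{\wt y}:V\to T_yY_\rho$ is linear symplectic (in particular injective).

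It remains to verify the intertwining condition \eqref{eq:T rho h}, which in the present situation reads $A^\rho_{\wt y}\circ\rho_h=d\psi_\rho(c_a(h))(y)\circ A^\rho_{\wt y}$ for every $h\in H$; here $d\psi_\rho(c_a(h))(y)$ denotes the action of $c_a(h)$ in $\isotr{\psi_\rho}{y}$, i.e.\ the differential at $y$ of the diffeomorphism $\psi_\rho(c_a(h),\cdot)$. This is the main step. Let $\psiL_\rho$ be the $G$-action on $T^*G\x V$ induced by left translation on $G$, acting trivially on $V$; by construction it preserves $(\muD_\rho)^{-1}(0)$, descends via $\pi_\rho$ to $\psi_\rho$, and $\pi_\rho$ is constant along $\psiD_\rho$-orbits. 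Fix $h\in H$ and put $z:=\psiL_\rho(c_a(h))(\wt y)$. A short computation — using that $\phi$ is central (so $\phi$ is unaffected by the $\Ad^*$-twist in the cotangent lift of right translation) and that $\rho_hv=v$ — shows that $z=\psiD_\rho(h^{-1})(\wt y)$; in particular $z\in(\muD_\rho)^{-1}(0)$, and $\pi_\rho(z)=y$ (consistently, also $\pi_\rho(z)=\psi_\rho(c_a(h),y)=y$). Fix $w\in V$ and set $X:=d\iota_{a,\phi}(v)w\in\ker d\muD_\rho(\wt y)$, a vector with vanishing $T^*G$-component. Differentiating $\pi_\rho\circ\psiL_\rho(c_a(h),\cdot)=\psi_\rho(c_a(h),\cdot)\circ\pi_\rho$ at $\wt y$ yields
\[d\psi_\rho(c_a(h))(y)\big(A^\rho_{\wt y}w\big)=d\pi_\rho(z)\big(d\psiL_\rho(c_a(h))(\wt y)\,X\big).\]
Since $\psiL_\rho$ acts trivially on the $V$-factor, $d\psiL_\rho(c_a(h))(\wt y)\,X$ is the tangent vector at $z$ with vanishing $T^*G$-component and $V$-component $w$. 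Differentiating $\pi_\rho\circ\psiD_\rho(h,\cdot)=\pi_\rho$ at $z$ and using $\psiD_\rho(h)(z)=\wt y$ gives $d\pi_\rho(z)=d\pi_\rho(\wt y)\circ d\psiD_\rho(h)(z)$; as $\psiD_\rho(h)$ acts on the $V$-factor by $\rho_h$, applying this to $d\psiL_\rho(c_a(h))(\wt y)\,X$ produces $d\pi_\rho(\wt y)\big(d\iota_{a,\phi}(v)(\rho_hw)\big)=A^\rho_{\wt y}(\rho_hw)$. Hence $d\psi_\rho(c_a(h))(y)\circ A^\rho_{\wt y}=A^\rho_{\wt y}\circ\rho_h$, which is \eqref{eq:T rho h}. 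Together with the group condition and with linear symplecticity of $A^\rho_{\wt y}$, this shows that $\big(a,A^\rho_{\wt y}\big)$ is a morphism from $\rho$ to $\isotr{\psi_\rho}{y}$.

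The step I expect to require the most care is the identification $z=\psiD_\rho(h^{-1})(\wt y)$: both hypotheses on $y$ enter exactly there. Centrality of $\mu_\rho(y)$, equivalently of $\phi$, is what lets one match the covector $c_a(h)a\phi$ produced by the left-translation action with the covector produced by the $\psiD_\rho$-action (where an $\Ad^*$-twist would otherwise appear), and the condition $\Stab{\psi_\rho}{y}=c_a(H)$ together with centrality is what forces $v$ to be $\rho$-fixed, which makes the $V$-components of $z$ and $\wt y$ agree. Once this identification is in place, the equivariance follows from a routine chase through the two defining properties of $\pi_\rho$ — the descent of $\psiL_\rho$ and the $\psiD_\rho$-invariance — while the remaining two conditions on $\big(a,A^\rho_{\wt y}\big)$ are immediate.
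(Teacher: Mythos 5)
Your proof is correct and follows essentially the same route as the paper: both verify linear symplecticity from the fact that $\iota_{a,\phi}$ is a symplectic embedding into the zero level set, and both obtain the intertwining relation by comparing $\iota_{a,\phi}\circ\rho_h$ with $(\psiL_\rho)_{c_a(h)}\circ\iota_{a,\phi}$ modulo the $\psiD_\rho$-action and then differentiating through $\pi_\rho$, using centrality of $\phi$ and $\rho_hv=v$. Your version merely makes explicit, via the intermediate point $z$ and the identity $d\pi_\rho(z)=d\pi_\rho(\wt y)\circ d\psiD_\rho(h)(z)$, the base-point bookkeeping that the paper leaves implicit.
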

\begin{proof}[Proof of Claim \ref{claim:A rho wt y}] The map $\iota_{a,\phi}$ is a symplectic embedding. It follows that $A^\rho_{\wt y}$ is linear symplectic. We denote by $\psiL:G\x T^*G\x V\to T^*G\x V$ the action induced by the left-translation on $G$. Let $h\in H$. For all $w\in V$, we have
\begin{align*}\iota_{a,\phi}\circ\rho_h(w)&=\big(a,a\phi,\rho_hw\big)\\
&=\big(ah,a\phi h,w\big)\\
&=(\psiL_\rho)_{c_a(h)}\circ\iota_{a,\phi}(w)\qquad\textrm{(using that $\phi$ is central)}
\end{align*}
Using that $\rho_h$ is linear, it follows that
\begin{align*}d\iota_{a,\phi}(v)\rho_h&=d\iota_{a,\phi}(v)d\rho_h(v)\\
&=d\big((\psiL_\rho)_{c_a(h)}\big)(\wt y)d\iota_{a,\phi}(v).
\end{align*}
Since $\pi_\rho\circ(\psiL_\rho)_g=(\psi_\rho)_g\circ\pi_\rho$, it follows that
\begin{align*}A^\rho_{\wt y}\rho_h&=d\pi_\rho(\wt y)d\iota_{a,\phi}(v)\rho_h\\
&=d(\psi_\rho)_{c_a(h)}(y)d\pi_\rho(\wt y)d\iota_{a,\phi}(v)\\
&=d(\psi_\rho)_{c_a(h)}(y)A^\rho_{\wt y}.
\end{align*}
The statement of Claim \ref{claim:A rho wt y} follows.
\end{proof}
Let $y\in Y_\rho$. Recall that
\[L_y=L^{\psi_\rho}_y:\g\to T_yY_\rho\]
denotes the infinitesimal $\psi_\rho$-action. 
\begin{claim}\label{claim:A L}
\begin{eqnarray}\label{eq:isotropic}&\im L_y\textrm{ is isotropic,}&\\
\label{eq:sub}&\im A^\rho_{\wt y}\sub\big(\im L_y\big)^{(\om_\rho)_y}.&
\end{eqnarray}
\end{claim}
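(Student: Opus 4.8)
The plan is to reduce both inclusions to statements about the momentum map $\mu_\rho$ for $\psi_\rho$ via Remark \ref{rmk:ker d mu}, which gives $\ker d\mu_\rho(y)=\big(\im L^\rho_y\big)^{(\om_\rho)_y}$. With this identity, assertion \eqref{eq:isotropic}, namely that $\im L^\rho_y$ is isotropic (contained in its own symplectic complement), becomes the equation $d\mu_\rho(y)\circ L^\rho_y=0$, and assertion \eqref{eq:sub} becomes $d\mu_\rho(y)\circ A^\rho_{\wt y}=0$.

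For \eqref{eq:isotropic} I would differentiate the equivariance identity $\mu_\rho\circ(\psi_\rho)_g=\Ad^*(g)\circ\mu_\rho$ at $g=e$ and evaluate at $y$: in a direction $\xi\in\g$ the left-hand side has derivative $d\mu_\rho(y)L^\rho_y(\xi)$, while the right-hand side has the derivative at $e$ of $g\mapsto\Ad^*(g)\mu_\rho(y)$, which vanishes because $\mu_\rho(y)$ is central, hence fixed by $\Ad^*(G)$. Therefore $d\mu_\rho(y)L^\rho_y=0$, which is \eqref{eq:isotropic}.

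For \eqref{eq:sub} I would use that $\mu_\rho$ was constructed so that $\mu_\rho\circ\pi_\rho=\muL\circ\pr_1$ on $(\muD_\rho)^{-1}(0)$, together with Claim \ref{claim:im d iota phi}: since $0$ is a regular value of $\muD_\rho$ near its zero level set (the $H$-action there being free), $\ker d\muD_\rho(\wt y)=T_{\wt y}\big((\muD_\rho)^{-1}(0)\big)$, so $d\iota_{a,\phi}(v)$ takes values in $T_{\wt y}\big((\muD_\rho)^{-1}(0)\big)$ and the composition below is legitimate. Then
\[d\mu_\rho(y)\circ A^\rho_{\wt y}=d\mu_\rho(y)\circ d\pi_\rho(\wt y)\circ d\iota_{a,\phi}(v)=d\big(\muL\circ\pr_1\circ\iota_{a,\phi}\big)(v).\]
But $\muL\circ\pr_1\circ\iota_{a,\phi}$ is the constant map $w\mapsto\muL(a,a\phi)=\Ad^*(a)\phi$, so its derivative at $v$ vanishes, giving \eqref{eq:sub}.

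I expect no genuine obstacle here: the computation is routine once one invokes Remark \ref{rmk:ker d mu} and the equivariance of the specific momentum map $\mu_\rho$. The only point requiring care is the bookkeeping ensuring that $\mu_\rho\circ\pi_\rho$, which is defined only on the zero level set, may legitimately be differentiated along $\iota_{a,\phi}$ at $v$ — and this is exactly the regular-value property already used in constructing $Y_\rho$. The substantive work in the proof of Lemma \ref{le:A y} lies in the earlier Claim \ref{claim:im d iota phi} and Claim \ref{claim:A rho wt y}, not here.
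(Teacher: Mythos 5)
Your proposal is correct and follows essentially the same route as the paper: both parts are reduced via Remark \ref{rmk:ker d mu} to showing that $d\mu_\rho(y)$ annihilates $\im L^\rho_y$ and $\im A^\rho_{\wt y}$, the former from centrality of $\mu_\rho(y)$ (the paper asserts this directly where you spell out the equivariance computation) and the latter from the identity $\mu_\rho\circ\pi_\rho=\muL\circ\pr_1$ and constancy of $\pr_1\circ\iota_{a,\phi}$, exactly as in the paper.
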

\begin{proof}[Proof of Claim \ref{claim:A L}] Proof of \eqref{eq:isotropic}: Our hypothesis that $\mu_\rho(y)$ is central implies that
\[\im L_y\sub\ker d\mu_\rho(y).\]
By Remark \ref{rmk:ker d mu} we have
\begin{equation}\label{eq:ker d mu rho y}\ker d\mu_\rho(y)=(\im L_y)^{(\om_\rho)_y}.\end{equation}
Statement \eqref{eq:isotropic} follows.

Proof of \eqref{eq:sub}: Since $\mu_\rho\circ\pi_\rho=\muL\circ\pr_1$, we have
\begin{align*}d\mu_\rho(y)A^\rho_{\wt y}&=d\mu_\rho(y)d\pi_\rho(\wt y)d\iota_{a,\phi}(v)\\
&=d\big(\mu_\rho\circ\pi_\rho\circ\iota_{a,\phi}\big)(v)\\
&=d\big(\muL\circ\pr_1\circ\iota_{a,\phi}\big)(v)\\
&=0.
\end{align*}
Here in the last step we used that the map $\pr_1\circ\iota_{a,\phi}$ is constantly equal to $(a,a\phi)$. It follows that
\[\im A^\rho_{\wt y}\sub\ker d\mu_\rho(y).\]
Using \eqref{eq:ker d mu rho y}, the claimed inclusion \eqref{eq:sub} follows. This completes the proof of Claim \ref{claim:A L}.
\end{proof}
By part \eqref{eq:isotropic} of this claim there is a canonical projection
\[\pr^\rho_y:\big(\im L_y\big)^{(\om_\rho)_y}\to V^{\psi_\rho}_y=\big(\im L_y\big)^{(\om_\rho)_y}/\im L_y.\]
By part \eqref{eq:sub} the restriction
\[\pr^\rho_y\big|\im A^\rho_{\wt y}\]
is well-defined. It follows from Claim \ref{claim:A rho wt y} and the equality $\Stab{\psi_\rho}{y}=c_a(H)$ that $\im A^\rho_{\wt y}$ is invariant under $\isotr{\psi_\rho}{y}$.
\begin{claim}\label{claim:sympl iso} The pair $\left(e,\pr^\rho_y\big|\im A^\rho_{\wt y}\right)$ is an isomorphism between the restriction of $\isotr{\psi_\rho}{y}$ to $\im A^\rho_{\wt y}$ and $\quot{\psi_\rho}{y}$.
\end{claim}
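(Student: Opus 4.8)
Here is how I would prove Claim \ref{claim:sympl iso}.

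The plan is to verify, in turn, the four statements that together say that $\left(e,\pr^\rho_y\big|\im A^\rho_{\wt y}\right)$ is an isomorphism of symplectic $\Stab{\psi_\rho}{y}$-representations: (a) the restriction of $\isotr{\psi_\rho}{y}$ to $\im A^\rho_{\wt y}$ is itself a symplectic representation; (b) the map $\pr^\rho_y\big|\im A^\rho_{\wt y}$ is linear symplectic; (c) it is bijective; and (d) it intertwines the two actions. For (a) I would use the fact, recorded in the proof of Claim \ref{claim:A rho wt y}, that $A^\rho_{\wt y}:(V,\si)\to\big(T_yY_\rho,(\om_\rho)_y\big)$ is a symplectic embedding; hence $\im A^\rho_{\wt y}$ is a symplectic linear subspace of $T_yY_\rho$, and, being invariant under $\isotr{\psi_\rho}{y}$ (as noted just above the statement of the claim), it carries a well-defined symplectic $\Stab{\psi_\rho}{y}$-representation.

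For (b) and (c) the key observation is that $\pr^\rho_y$ is by construction the linear symplectic reduction projection, so that, by Remark \ref{rmk:lin quot}\reff{rmk:lin quot:quot}, it satisfies $(\pr^\rho_y)^*\BAR\om_y=(\om_\rho)_y\big|(\im L^\rho_y)^{(\om_\rho)_y}$. Since $\im A^\rho_{\wt y}\sub(\im L^\rho_y)^{(\om_\rho)_y}$ by \eqref{eq:sub}, restricting this identity shows that $\pr^\rho_y\big|\im A^\rho_{\wt y}$ pulls $\BAR\om_y$ back to the restriction of $(\om_\rho)_y$ to $\im A^\rho_{\wt y}$, which is nondegenerate by (a); this proves (b), and in particular $\pr^\rho_y\big|\im A^\rho_{\wt y}$ is injective, since a symplectic linear map between symplectic vector spaces has trivial kernel. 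Surjectivity I would then get from a dimension count. On the one hand $\dim\im A^\rho_{\wt y}=\dim V$, since $A^\rho_{\wt y}$ is injective. On the other hand $\ker L^\rho_y$ is the Lie algebra of $\Stab{\psi_\rho}{y}=c_a(H)$, so $\dim\im L^\rho_y=\dim G-\dim H$, while the dimension formula for a symplectic quotient gives $\dim Y_\rho=\dim(T^*G\x V)-2\dim H=2\dim G+\dim V-2\dim H$; using that $\im L^\rho_y$ is isotropic (see \eqref{eq:isotropic}), it follows that $\dim V^{\psi_\rho}_y=\dim T_yY_\rho-2\dim\im L^\rho_y=\dim V$. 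An injective linear map between vector spaces of equal dimension is onto, which gives (c).

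Finally, (d) is immediate from the definitions: by Remark \ref{rmk:lin quot}\reff{rmk:lin quot:quot} and the definition \eqref{eq:slice} of $\slice{\psi_\rho}{y}$, the projection $\pr^\rho_y$ is $\Stab{\psi_\rho}{y}$-equivariant as a map from $\isotr{\psi_\rho}{y}$ restricted to $(\im L^\rho_y)^{(\om_\rho)_y}$ to $\slice{\psi_\rho}{y}$; restricting to the invariant subspace $\im A^\rho_{\wt y}$, the map $\pr^\rho_y\big|\im A^\rho_{\wt y}$ intertwines the restriction of $\isotr{\psi_\rho}{y}$ with $\slice{\psi_\rho}{y}$. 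Since $c_e=\id$, this is exactly condition \eqref{eq:T rho h}, while \eqref{eq:gHg -1} holds trivially; together with (a)--(c) this shows that $\left(e,\pr^\rho_y\big|\im A^\rho_{\wt y}\right)$ is an isomorphism. I do not expect a genuine obstacle here, since every ingredient is already available; the one thing to be careful about is the symplectic-form bookkeeping in (b) — checking that the forms on $\im A^\rho_{\wt y}$ and on $V^{\psi_\rho}_y$ are precisely the ones induced by $(\om_\rho)_y$ — together with getting the dimension count in (c) exactly right.
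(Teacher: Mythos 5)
Your proof is correct and follows essentially the same route as the paper: you show $\pr^\rho_y\big|\im A^\rho_{\wt y}$ is linear symplectic (hence injective) because $\pr^\rho_y$ is presymplectic and $\im A^\rho_{\wt y}$ is a symplectic subspace, then deduce surjectivity from the same dimension count $\dim V^{\psi_\rho}_y=\dim T_yY_\rho-2\dim\im L^\rho_y=\dim V=\dim\im A^\rho_{\wt y}$, and conclude with equivariance. The extra bookkeeping you include (the pullback identity for $\BAR\om_y$ and the explicit verification of \eqref{eq:gHg -1} and \eqref{eq:T rho h} for $g=e$) is sound but not a different argument.
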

\begin{proof}[Proof of Claim \ref{claim:sympl iso}] The projection $\pr^\rho_y$ is presymplectic. Since $\im A^\rho_{\wt y}$ is symplectic, the restriction $\pr^\rho_y\big|\im A^\rho_{\wt y}$ is linear symplectic and therefore injective. We have
\begin{align*}\dim\left(V^{\psi_\rho}_y=\big(\im L_y\big)^{(\om_\rho)_y}/\im L_y\right)&=\dim(Y_\rho)-2\dim\im L_y\\
&=\dim(T^*G\x V)-2\dim H-2\dim\im L_y\\
&=2\dim G+\dim V-2\dim H-2\dim G+2\dim\Stab{\psi_\rho}{y}\\
&=\dim V\qquad\textrm{(since $\Stab{\psi_\rho}{y}=c_a(H)$)}\\
&=\dim\im A^\rho_{\wt y}\qquad\textrm{(since $A^\rho_{\wt y}$ is linear symplectic, hence injective)}\\
&=\dim\left(\pr^\rho_y\left(\im A^\rho_{\wt y}\right)\right)\qquad\textrm{(since $\pr^\rho_y\big|\im A^\rho_{\wt y}$ is injective).}
\end{align*}
It follows that $V^{\psi_\rho}_y=\pr^\rho_y\left(\im A^\rho_{\wt y}\right)$, hence $\pr^\rho_y\big|\im A^\rho_{\wt y}$ is surjective. Hence this map is a linear symplectic isomorphism. It is $\Stab{\psi_\rho}{y}$-equivariant. The statement of Claim \ref{claim:sympl iso} follows.
\end{proof}
It follows from Claims \ref{claim:A rho wt y} and \ref{claim:sympl iso} that $\rho$ and $\quot{\psi_\rho}{y}$ are isomorphic. This proves Lemma \ref{le:A y}.
\end{proof}

\begin{proof}[Proof of Theorem \ref{thm:model}\reff{thm:model:inj}] Let $(H,\rho)$ and $(H',\rho')$ be two objects of $\SympRep{G}$ whose images under $\Model{G}$ are isomorphic. We choose an isomorphism $\Phi$ between these images. We define
\[y:=[e,0,0]\in Y_\rho,\quad[a',a'\phi',v']:=y':=\Phi(y).\]
By Remark \ref{rmk:stab} we have
\begin{eqnarray}\label{eq:Stab psi rho y H}&\Stab{\psi_\rho}{y}=H,&\\
\nn&\Stab{\psi_{\rho'}}{y'}\sub c_{a'}(H').&
\end{eqnarray}
Since $\Phi$ is $G$-equivariant, we have
\begin{equation}\label{eq:Stab psi rho y}\Stab{\psi_\rho}{y}=\Stab{\psi_{\rho'}}{y'}.\end{equation}
It follows that $H\sub c_{a'}(H')$. By considering $\Phi^{-1}$, an analogous argument shows that $H'$ is conjugate to a subgroup of $H$. Since $G$ is compact and $H$ and $H'$ are closed, these subgroups are compact. Therefore, applying Lemma \ref{le:N conj N'}, it follows that
\begin{equation}\label{eq:H c a' H'}H=c_{a'}(H').\end{equation}
Since $\mu_\rho(y)=\muL_\rho(e,0,0)=0$ and $\Stab{\psi_\rho}{y}=H$, the hypotheses of Lemma \ref{le:A y} are satisfied. Applying this lemma, it follows that $\rho$ is isomorphic to $\quot{\psi_\rho}{y}$. 

By Remark \ref{rmk:sympl quot Phi}\reff{rmk:sympl quot Phi:quot} $\quot{\psi_\rho}{y}$ is isomorphic to $\quot{\psi_{\rho'}}{y'}$.
\begin{claim}\label{claim:rho rho'} $\quot{\psi_{\rho'}}{y'}$ is isomorphic to $\rho'$.
\end{claim}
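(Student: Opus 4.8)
The plan is to deduce Claim~\ref{claim:rho rho'} from Lemma~\ref{le:A y}, applied this time to the object $(H',\rho')$ at the point $y'\in Y_{\rho'}$, using the representative $(a',a'\phi',v')$. If the two hypotheses of that lemma hold for these data, it yields directly that $\rho'=(H',\rho')$ is isomorphic to $\slice{\psi_{\rho'}}{y'}$, which is exactly the claim. So the task reduces to verifying that $\mu_{\rho'}(y')$ is central in $\g^*$ and that $\Stab{\psi_{\rho'}}{y'}=c_{a'}(H')$.

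The stabilizer hypothesis is already available: by \reff{eq:Stab psi rho y} together with \reff{eq:Stab psi rho y H} we have $\Stab{\psi_{\rho'}}{y'}=\Stab{\psi_\rho}{y}=H$, and \reff{eq:H c a' H'} gives $H=c_{a'}(H')$; combining these yields $\Stab{\psi_{\rho'}}{y'}=c_{a'}(H')$.

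For the centrality hypothesis I would argue as follows. Since $\Phi$ is a $G$-equivariant symplectomorphism from $\psi_\rho$ to $\psi_{\rho'}$, Remark~\ref{rmk:sympl slice Phi}\reff{rmk:sympl slice Phi:Ham} shows that $\mu_{\rho'}\circ\Phi$ is a momentum map for $\psi_\rho$. Because $G$ is connected, $Y_\rho$ is connected, and therefore any two (equivariant) momentum maps for $\psi_\rho$ differ by an element of $(\g^*)^G$, i.e.\ by a coadjoint-invariant --- hence central --- element of $\g^*$. Thus $\mu_{\rho'}\circ\Phi=\mu_\rho+c$ with $c$ central. Evaluating at $y=[e,0,0]$ and using that $\mu_\rho(y)=0$ (as noted above), we obtain $\mu_{\rho'}(y')=c$, so $\mu_{\rho'}(y')$ is central.

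With both hypotheses in place, Lemma~\ref{le:A y} applies to $(H',\rho')$ at $y'$ and shows that $\rho'$ is isomorphic to $\slice{\psi_{\rho'}}{y'}$, proving Claim~\ref{claim:rho rho'}. The only step that is not purely formal is the centrality check; its essential ingredient is the connectedness of $Y_\rho$ --- this is precisely where the standing assumption that $G$ is connected enters --- combined with the standard fact that an equivariant momentum map on a connected symplectic manifold is unique up to a coadjoint-fixed additive constant.
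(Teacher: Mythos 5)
Your proposal is correct and follows essentially the same route as the paper: both verify the stabilizer hypothesis via \eqref{eq:Stab psi rho y}, \eqref{eq:Stab psi rho y H}, \eqref{eq:H c a' H'}, establish centrality of $\mu_{\rho'}(y')$ by comparing the two momentum maps $\mu_{\rho'}\circ\Phi$ and $\mu_\rho$ on the connected manifold $Y_\rho$ and evaluating at $y$, and then apply Lemma \ref{le:A y} to $(H',\rho')$ at $y'$. No gaps.
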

\begin{proof}[Proof of Claim \ref{claim:rho rho'}] By (\ref{eq:Stab psi rho y},\ref{eq:Stab psi rho y H},\ref{eq:H c a' H'}) we have $\Stab{\psi_{\rho'}}{y'}=c_{a'}(H')$. By Remark \ref{rmk:sympl quot Phi}\reff{rmk:sympl quot Phi:Ham} the map $\mu_{\rho'}\circ\Phi$ is a momentum map for $\psi_\rho$. Since $G$ is connected, the same holds for $Y_\rho$. It follows that $\mu_{\rho'}\circ\Phi-\mu_\rho$ is constantly equal to a central element of $\g^*$. At $y$ this map attains the value
\[\mu_{\rho'}(y')-\mu_\rho(y)=\mu_{\rho'}(y')-0,\]
which is thus a central element of $\g^*$. Hence the hypotheses of Lemma \ref{le:A y} are satisfied. Applying this lemma, the statement of Claim \ref{claim:rho rho'} follows.
\end{proof}
Combining this claim with what we already showed, it follows that $\rho$ is isomorphic to $\rho'$. 

Hence $\Model{G}$ induces an injective map between the sets of isomorphism classes. This proves Theorem \ref{thm:model}\reff{thm:model:inj}.
\end{proof}

\begin{proof}[Proof of Theorem \ref{thm:model}(\ref{thm:model:mor},\ref{thm:model:prop mor},\ref{thm:model:prop fct})] \reff{thm:model:mor} follows from a straightforward argument.\\

\reff{thm:model:prop mor}: Let $\rho$ and $\rho'$ be objects of $\SympRep{G}$, such that $\rho'$ is momentum proper and there exists a morphism $(g,T)$ from $\rho$ to $\rho'$. Let $Q\sub\h$ be compact. Equality \eqref{eq:Ad g * nu rho'} implies that
\begin{equation}\label{eq:nu rho -1 Q}\nu_\rho^{-1}(Q)=(\nu_{\rho'}\circ T)^{-1}\left(\Ad^*(g)(Q)\right).\end{equation}
The set $\Ad^*(g)(Q)$ is compact. By hypothesis $\nu_{\rho'}$ is proper, and by Claim \ref{claim:T Phi proper} the same holds for $T$. It follows that $\nu_{\rho'}\circ T$ is proper, and therefore, using \eqref{eq:nu rho -1 Q}, the set $\nu_\rho^{-1}(Q)$ is compact. Hence $\nu_\rho$ is proper, i.e., $\rho$ is momentum proper, as claimed.

Let now $(M,\om,\psi)$ and $(M',\om',\psi')$ be objects of $\HamEx{G}$, such that $\psi'$ is momentum proper and there exists a morphism $\Phi$ from $\psi$ to $\psi'$. We choose a momentum map $\mu'$ for $\psi'$. By definition, $\Phi$ is a proper $G$-equivariant symplectic embedding. It follows that $\mu'\circ\Phi$ is a proper momentum map for $\psi$. Hence $\psi$ is momentum proper. This proves \reff{thm:model:prop mor}.\\

\reff{thm:model:prop fct}: We prove ``$\then$'': Assume that $(H,\rho)$ is momentum proper, i.e., that $\nu_\rho$ is proper. Let $K\sub\g^*$ be compact. We denote by $i:H\to G$ the inclusion and by $i^*:\g^*\to\h^*$ the induced map. We define
\begin{equation}\label{eq:A}A:=\big\{(a,a\phi,v)\in T^*G\x V\,\big|\,\Ad^*(a)(\phi)\in K,\,i^*\phi=\nu_\rho(v)\big\}\sub(\muD_\rho)^{-1}(0).\end{equation}
We denote by $L_*:G\x T^*G\to T^*G$ the map induced by left translation. $A$ is a closed subset of
\[B:=L_*\big(G\x\Ad^*(G)(K)\big)\x\nu_\rho^{-1}\big(i^*\Ad^*(G)(K)\big).\]
Since $G$ is compact and $\Ad^*$ is continuous, the set $\Ad^*(G)(K)$ is compact. Since $i^*$ is continuous and $\nu_\rho$ is proper, it follows that $\nu_\rho^{-1}\big(i^*\Ad^*(G)(K)\big)$ is compact. Using that $L_*$ is continuous, it follows that $B$ is compact. It follows that $A$ is compact, and hence $\mu_\rho^{-1}(K)=\pi_\rho(A)$ is compact. Hence $\mu_\rho$ is proper. This proves ``$\then$''.\\

``$\follows$'': Assume that $\mu_\rho$ is proper. Let $Q\sub\h^*$ be compact. We choose a compact set $K\sub\g^*$ such that $i^*(K)=Q$. (We may e.g.~choose a linear complement $W\sub\g^*$ of $\ker i^*$ and define $K:=(i^*)^{-1}(Q)\cap W$.) Since $H$ is compact, the map $\pi_\rho:(\muD_\rho)^{-1}(0)\to Y_\rho$ is proper. It follows that $\mu_\rho\circ\pi_\rho$ is proper. Hence the set
\[\big(\mu_\rho\circ\pi_\rho\big)^{-1}(K)\]
is compact. This set agrees with $A$, defined as in \eqref{eq:A}. We denote by $\pr_2:T^*G\x V\to V$\label{pr2} the canonical projection. The set
\[C:=\big\{(e,\phi,v)\,\big|\,\phi\in K,\,i^*\phi=\nu_\rho(v)\big\}\]
is a closed subset of $A$, hence compact. It follows that $\pr_2(C)$ is compact. Since $i^*(K)=Q$, we have $\pr_2(C)=\nu_\rho^{-1}(Q)$. It follows that $\nu_\rho$ is proper. This proves ``$\follows$'', and completes the proof of \reff{thm:model:prop fct} and therefore of Theorem \ref{thm:model} (except for \reff{thm:model:surj}, which is proved in \cite[1.5.~Theorem]{KZ}).
\end{proof}

\section{Proof of Corollary \ref{cor:class crit} (classification of critical Hamiltonian actions)}\label{sec:proof:cor:class crit}

The well-definedness part of Corollary \ref{cor:class crit} follows from the next lemma.
\begin{lemma}\label{le:Tstar G}
\begin{enui}\item\label{le:Tstar G:obj} The functor $\Tstar{G}$ maps objects of $\Trans{G}$ to objects of $\HamCrit{G}$.
\item\label{le:Tstar G:mor} It maps morphisms of $\Trans{G}$ to morphisms of $\HamCrit{G}$.
\end{enui}
\end{lemma}
\begin{proof}[Proof of Lemma \ref{le:Tstar G}]\textbf{\reff{le:Tstar G:obj}:} Let $(Q,\theta)$ be an object of $\Trans{G}$. Then $\Tstar{G}(Q,\theta)$ is an object of $\HamEx{G}$. To see that is an object of $\HamExProp{G}$, we denote by $\g$ the Lie algebra of $G$. We choose a Finsler norm $\Vert\cdot\Vert$ on $T^*Q$ and a norm $|\cdot|$ on $\g^*$. We define $L^\theta_q$ as in \eqref{eq:L psi x} and 
\begin{equation}\label{eq:mu}\mu:T^*Q\to\g^*,\quad\mu(q,p):=pL^\theta_q.\end{equation}
This is a momentum map for the lifted $G$-action $\theta_*$. Since $\theta$ is transitive, the map $L^\theta_q$ is surjective. Therefore, an elementary argument using \eqref{eq:mu} and that $Q$ is compact, shows that
\[\sup\big\{\Vert p\Vert\,\big|\,(q,p)\in T^*Q:\,|\mu(q,p)|\leq C\big\}<\infty,\quad\forall C\in\R.\]
It follows that $\mu$ is proper. Therefore, $\Tstar{G}(Q,\theta)$ is an object of $\HamExProp{G}$. Since $Q$ is closed and $T^*Q$ deformation retracts onto $Q$, it follows that $\Tstar{G}(Q,\theta)$ is an object of $\HamCrit{G}$. This proves \reff{le:Tstar G:obj}.

\textbf{\reff{le:Tstar G:mor}:} Let $f:Q\to Q'$ be a morphisms of $\Trans{G}$, i.e., a $G$-equivariant diffeomorphism. The induced map $f_*:T^*Q\to T^*Q'$ is a $G$-equivariant symplectomorphism, hence a morphism of $\HamEx{G}$, and therefore of $\HamCrit{G}$. This proves \reff{le:Tstar G:mor} and therefore Lemma \ref{le:Tstar G}.
\end{proof}

By Lemma \ref{le:Tstar G} the restriction
\[\Tstar{G}:\Trans{G}\to\HamCrit{G}\]
is well-defined. The Chain Rule implies that it is functorial. In order to show that the map \eqref{eq:iso class } is a bijection, we need the following lemma. We define $\Sub{G}$\label{Sub G} to be the category whose objects are the closed subgroups of $G$ and whose morphisms between $H$ and $H'$ are those elements $g$ of $G$, satisfying $c_g(H)=H'$.\footnote{The composition of morphisms is given by the composition in $G$.} We define the functor\label{G/}
\[G/:\Sub{G}\to\Trans{G}\]
as follows. It maps an object $H$ to the quotient $G/H$, equipped with the canonical left $G$-action. Let $(H,H',g)$ be a morphism of $\Sub{G}$. We denote by $\pr_H:G\to G/H$\label{pr H} the canonical projection and define $G/(H,H',g):G/H\to G/H'$ to be the unique map satisfying
\begin{equation}\label{eq:G g}G/(H,H',g)\circ\pr_H=\pr_{H'}\circ R^{g^{-1}}.\end{equation}
This is a well-defined morphism of $\Trans{G}$. This construction is functorial. This defines the functor $G/$.

We define the functor\label{IIII G}
\[\IIII{G}:\Sub{G}\to\SympRep{G},\quad\IIII{G}(H):=\big(H,\{0\},0,0\big),\,\IIII{G}(g):=(g,0).\]
\begin{lemma}\label{le:nat iso} The target-restricted functor $\Model{G}\circ\IIII{G}:\Sub{G}\to\HamCrit{G}$ is well-defined and naturally isomorphic to $\Tstar{G}\circ G/:\Sub{G}\to\HamCrit{G}$.
\end{lemma}
\begin{proof}[Proof of Lemma \ref{le:nat iso}] By Corollary \ref{cor:class} the functor $\Model{G}\circ\IIII{G}$ takes values in $\HamExProp{G}$. Let $H$ be an object of $\Sub{G}$. The manifold part of $\Model{G}\circ\IIII{G}(H)$ is homotopy equivalent to the closed manifold $G/H$ and has dimension $2(\dim G-\dim H)$. Therefore, $\Model{G}\circ\IIII{G}(H)$ is critical. Hence $\Model{G}\circ\IIII{G}$ takes values in $\HamCrit{G}$, as claimed.

Let $H\in\Sub{G}$. We define $\muD_{H,\rho},Y_{H,\rho}$ as in (\ref{eq:muD H rho},\ref{eq:Y H rho}) and denote by $\pi_{H,\rho}:(\muD_{H,\rho})^{-1}(0)\to Y_{H,\rho}$ the canonical projection. We canonically identify $Y_{H,0}$ with the symplectic quotient of the Hamiltonian $H$-action on $T^*G$ induced by the right $H$-action on $G$. We define the map
\begin{equation}\label{eq:Phi H}\Phi_H:T^*(G/H)\to Y_{H,0},\quad\Phi_H(\BAR q,\BAR p):=\pi_{H,0}\big(q,\BAR pd\pr_H(q)\big),\end{equation}
where $q\in\BAR q$ is an arbitrary representative. This map is a symplectomorphism, see \cite[4.3.3 Theorem]{AM}. The map $\Phi_H$ is $G$-equivariant, and therefore an isomorphism of $\HamCrit{G}$. 
\begin{claim}\label{claim:Phi nat iso} The map $H\mapsto\Phi_H$ is a natural isomorphism between the functors $\Tstar{G}\circ G/$ and $\Model{G}\circ\IIII{G}$.
\end{claim}
\begin{pf}[Proof of Claim \ref{claim:Phi nat iso}] Let $(H,H',g)$ be a morphism of $\Sub{G}$. We show that
\begin{equation}\label{eq:Phi H'}\Phi_{H'}\circ\big((\Tstar{G}\circ G/)(H,H',g)\big)=\big(\Model{G}\circ\IIII{G}\big)(H,H',g)\circ\Phi_H.
\end{equation}
Let $(\BAR q,\BAR p)\in T^*(G/H)$. We choose a representative $q\in\BAR q$ and denote $q':=R^{g^{-1}}(q)$, $\BAR q':=\pr_{H'}(q')$, and $\BAR\phi:=G/(H,H',g)$. We have
%\begin{equation}\label{eq:Tstar G circ G/}
\begin{align*}(\Tstar{G}\circ G/)(H,H',g)(\BAR q,\BAR p)&=\big(\BAR\phi(\BAR q),\BAR pd\BAR\phi(\BAR q)^{-1}\big),\\
\BAR\phi(\BAR q)&=\BAR q'\qquad\textrm{(using \eqref{eq:G g}),}
\end{align*}
%\end{equation}
and therefore,
\begin{align*}\Phi_{H'}\circ\big((\Tstar{G}\circ G/)(H,H',g)\big)(\BAR q,\BAR p)&=\pi_{H',0}\big(q',\BAR pd\BAR\phi(\BAR q)^{-1}d\pr_{H'}(q')\big)\qquad\textrm{(using \eqref{eq:Phi H})}\\
&=\pi_{H',0}\big(q',\BAR pd\pr_H(q)dR^{g^{-1}}(q)^{-1}\big)\,\textrm{(using \eqref{eq:G g}, Chain Rule)}\\
&=\Model{G}(g,0)\circ\pi_{H,0}\big(q,\BAR pd\pr_H(q)\big)\qquad\textrm{(using \eqref{eq:Model G})}\\
&=\big(\Model{G}\circ\IIII{G}\big)(H,H',g)\circ\Phi_H(\BAR q,\BAR p)\qquad\textrm{(using \eqref{eq:Phi H}).}\\
\end{align*}
Hence equality \eqref{eq:Phi H'} holds. This proves Claim \ref{claim:Phi nat iso} 
\end{pf} 
and therefore Lemma \ref{le:nat iso}.
\end{proof}

\begin{proof}[Proof of Corollary \ref{cor:class crit}]\label{proof:cor:class crit} We show that the functor $\Model{G}\circ\IIII{G}$ is essentially bijective. By Corollary \ref{cor:class} the inverse of the map \eqref{eq:iso class} is well-defined. The image of the set of isomorphism classes of $\HamCrit{G}$ under this inverse map is contained in the image of the map between isomorphism classes induced by $\IIII{G}$. This follows from the fact that the manifold part of $\Model{G}\big(H,V,\si,\rho\big)$ is homotopy equivalent to the closed manifold $G/H$ and has dimension $2(\dim G-\dim H)+\dim V$. It follows that $\Model{G}\circ\IIII{G}:\Sub{G}\to\HamCrit{G}$ is essentially surjective, i.e., it induces a surjective map between the sets of isomorphism classes.

Since $\IIII{G}$ is essentially injective, Corollary \ref{cor:class} implies that the functor $\Model{G}\circ\IIII{G}:\Sub{G}\to\HamCrit{G}$ is also essentially injective, and therefore essentially bijective, as claimed.

Using Lemma \ref{le:nat iso}, it follows that $\Tstar{G}\circ G/:\Sub{G}\to\HamCrit{G}$ is essentially bijective. Therefore, $\Tstar{G}:\Trans{G}\to\HamCrit{G}$ is essentially surjective. The functor $G/:\Sub{G}\to\Trans{G}$ is essentially surjective. This follows from the orbit-stabilizer theorem for $G$-actions on manifolds. Since $\Tstar{G}\circ G/:\Sub{G}\to\HamCrit{G}$ is essentially injective, it follows that $\Tstar{G}:\Trans{G}\to\HamCrit{G}$ is essentially injective, and therefore essentially bijective. This means that the map \eqref{eq:iso class } is bijective. This proves Corollary \ref{cor:class crit}.
\end{proof}

\section{Inverse of the classifying map}

The next result states that the inverse of the classifying map \eqref{eq:iso class} is induced by assigning to a Hamiltonian action its symplectic quotient representation at any suitable point. To state the result, let $G$ be a group, $X$ a set, $\psi$ a $G$-action on $X$, and $x\in X$. Recall that $\Stab{\psi}{x}$ denotes the stabilizer of $\psi$ at $x$. We call $x$ \emph{$\psi$-maximal} iff for every $y\in X$, $\Stab{\psi}{x}$ contains some conjugate of $\Stab{\psi}{y}$.

Let $G$ be a compact and connected Lie group, $(M,\om)$ a symplectic manifold, $\psi$ a symplectic $G$-action on $M$, and $x\in M$. Recall that $\quot{\psi}{x}$ denotes the symplectic quotient representation of $\psi$ at $x$, see \eqref{eq:quot}. The latter is a symplectic representation of $\Stab{\psi}{x}$. Hence the pair $\big(\Stab{\psi}{x},\quot{\psi}{x}\big)$ is an object of $\SympRep{G}$.

Assume now that $\psi$ is Hamiltonian. We call $x$ \emph{$\psi$-central} iff $\mu(x)$ is a central value of $\g^*$ for every momentum map $\mu$ for $\psi$. (If $M$ is connected then equivalently, \emph{there exists} such a $\mu$.)

\begin{prop}\label{prop:inv} Assume that $\psi$ is an object of $\HamExProp{G}$.
\begin{enui}
\item\label{prop:inv:ex} There exists a $\psi$-maximal and -central point.
\item\label{prop:inv:iso} Let $\psi$ and $\psi'$ be isomorphic objects of $\HamExProp{G}$, $x$ be $\psi$-maximal and -central point, and $x'$ be a $\psi'$-maximal and -central point. Then $\quot{\psi}{x}$ and $\quot{\psi'}{x'}$ are isomorphic.
\item\label{prop:inv:inv} The inverse map of \eqref{eq:iso class} is given by
\begin{eqnarray}\nn&\big\{\textrm{isomorphism class of }\HamExProp{G}\big\}\to\big\{\textrm{isomorphism class of }\SympRepProp{G}\big\},&\\
\label{eq:inv}&\Psi\mapsto[\quot{\psi}{x}],&
\end{eqnarray}
where $\psi$ is an arbitrary representative of $\Psi$, and $x$ is an arbitrary $\psi$-maximal and -central point.
\end{enui}
\end{prop}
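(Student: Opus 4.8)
The plan is to deduce everything from Theorem \ref{thm:model} and Lemma \ref{le:A y}, together with the observation that the $G$-model action $\Model{G}(H,\rho)$ always has a canonical point, namely $y_0=[e,0,0]$, which is both $\psi_\rho$-maximal and $\psi_\rho$-central. For part \reff{prop:inv:ex}, I would first argue that $y_0$ is $\psi_\rho$-central: by Theorem \ref{thm:model}\reff{thm:model:well obj} the map $\mu_\rho$ is a momentum map and $\mu_\rho(y_0)=\muL(e,0)=0$, which is central, so by connectedness of $M=Y_\rho$ every momentum map differs from $\mu_\rho$ by a constant, hence takes a central value at $y_0$. To see $y_0$ is $\psi_\rho$-maximal, use Remark \ref{rmk:stab}: $\Stab{\psi_\rho}{y_0}=H$, while for an arbitrary $y=[a,a\phi,v]$ we have $\Stab{\psi_\rho}{y}=\{c_a(h)\mid h\in H,\ \rho_hv=v\}$, which is conjugate (via $c_{a^{-1}}$) to the subgroup $\{h\in H\mid\rho_hv=v\}$ of $H$; hence $H$ contains a conjugate of every isotropy group. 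Now given an arbitrary object $\psi$ of $\HamExProp{G}$: by Theorem \ref{thm:model}\reff{thm:model:surj} it is isomorphic to some $\Model{G}(H,\rho)$, and an equivariant symplectomorphism transports the maximal-and-central point $y_0$ of $Y_\rho$ to a point of $M$ which remains maximal (stabilizers are carried isomorphically, Remark \ref{rmk:sympl slice Phi}\reff{rmk:sympl slice Phi:slice}) and central (momentum maps pull back to momentum maps, Remark \ref{rmk:sympl slice Phi}\reff{rmk:sympl slice Phi:Ham}).

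For part \reff{prop:inv:iso} the key point is that a maximal-and-central point has symplectic slice representation isomorphic, as an object of $\SympRep{G}$, to a fixed object determined by $\Psi$. Concretely: using Theorem \ref{thm:model}\reff{thm:model:surj}, write $\psi\cong\Model{G}(H,\rho)$ via some $\Phi$, and let $x$ be $\psi$-maximal and -central. Then $x':=\Phi(x)\in Y_\rho$ is $\psi_\rho$-maximal and -central, and $\slice{\psi}{x}\cong\slice{\psi_\rho}{x'}$ by Remark \ref{rmk:sympl slice Phi}\reff{rmk:sympl slice Phi:slice}. The stabilizer $\Stab{\psi_\rho}{x'}$, being maximal in $Y_\rho$, must contain a conjugate of $H=\Stab{\psi_\rho}{y_0}$; but by the description in Remark \ref{rmk:stab} the stabilizer of any point of $Y_\rho$ is conjugate to a subgroup of $H$. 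Applying Lemma \ref{le:N conj N'} (as in the proof of \reff{thm:model:inj}, using compactness of $G$ and closedness of the subgroups) gives $\Stab{\psi_\rho}{x'}=c_a(H)$ for a representative $(a,a\phi,v)$ of $x'$. Since $x'$ is $\psi_\rho$-central, the hypotheses of Lemma \ref{le:A y} hold, so $\slice{\psi_\rho}{x'}\cong(H,\rho)$; hence $\slice{\psi}{x}\cong(H,\rho)$. Running the same argument for $\psi'$, $x'$ against the (a priori different) model $\Model{G}(H',\rho')$, one gets $\slice{\psi'}{x'}\cong(H',\rho')$; and since $\psi\cong\psi'$, essential injectivity (Theorem \ref{thm:model}\reff{thm:model:inj}) forces $(H,\rho)\cong(H',\rho')$, whence $\slice{\psi}{x}\cong\slice{\psi'}{x'}$.

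For part \reff{prop:inv:inv}, first check the assignment \reff{eq:inv} is well-defined: $\slice{\psi}{x}$ is an object of $\SympRep{G}$ by the discussion before the proposition, it is momentum proper because by \reff{prop:inv:iso}'s argument $\slice{\psi}{x}\cong(H,\rho)$ where $\Model{G}(H,\rho)\cong\psi$ is momentum proper, so $(H,\rho)$ is momentum proper by Theorem \ref{thm:model}(\ref{thm:model:prop mor},\ref{thm:model:prop fct}``$\follows$''); and independence of the choices of $\psi$ and $x$ is exactly \reff{prop:inv:iso}. It then remains to verify this map is a two-sided inverse of \eqref{eq:iso class}. One composite: starting from $[(H,\rho)]$, apply $\Model{G}$, then take a maximal-and-central point $x$ of $Y_\rho$ (e.g.\ $y_0$) and its slice; by Lemma \ref{le:A y} this returns $[(H,\rho)]$. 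The other composite: starting from $\Psi=[\psi]$, the slice $\slice{\psi}{x}$ is isomorphic to the object $(H,\rho)$ with $\Model{G}(H,\rho)\cong\psi$, so $\Model{G}(\slice{\psi}{x})\cong\psi$, i.e.\ we recover $\Psi$. Since \eqref{eq:iso class} is already a bijection by Corollary \ref{cor:class}, exhibiting a one-sided inverse suffices, but both composites are immediate from the above. The main obstacle is the step in \reff{prop:inv:iso} establishing that a maximal-and-central point of a model $Y_\rho$ has stabilizer exactly conjugate to $H$ — this is where maximality, Remark \ref{rmk:stab}, compactness, and Lemma \ref{le:N conj N'} all have to be combined correctly so that Lemma \ref{le:A y} applies; everything else is bookkeeping with the already-proved parts of Theorem \ref{thm:model}.
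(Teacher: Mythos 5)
Your proposal is correct and follows essentially the same route as the paper: the canonical point $[e,0,0]$ of a model, Remark \ref{rmk:stab}, Lemma \ref{le:N conj N'}, and Lemma \ref{le:A y} are combined exactly as in the paper's Remark \ref{rmk:rho slice}, and the general case is reduced to models via Theorem \ref{thm:model}\reff{thm:model:surj} and Remark \ref{rmk:sympl slice Phi}. The only (harmless) deviation is in \reff{prop:inv:iso}, where you use two possibly different models and invoke essential injectivity (Theorem \ref{thm:model}\reff{thm:model:inj}), whereas the paper transports both points into a single model $Y_\rho$ and applies Remark \ref{rmk:rho slice} twice, avoiding that appeal.
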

\begin{Rmk} It follows from (\ref{prop:inv:ex},\ref{prop:inv:iso}) that the map \eqref{eq:inv} is well-defined.
\end{Rmk}
In the proof of Proposition \ref{prop:inv} we will use the following.
\begin{rmk}\label{rmk:rho quot} Let $\rho$ be an object of $\SympRep{G}$ and $y\in Y_\rho$ a $\psi_\rho$-maximal and -central point. Then $\rho$ and $\quot{\psi_\rho}{y}$ are isomorphic. To see this, we write $y=:[a,a\phi,v]$. By Remark \ref{rmk:stab} we have $\Stab{\psi_\rho}{y}\sub c_a(H)$. Since $y$ is $\psi_\rho$-maximal, $\Stab{\psi_\rho}{y}$ contains some conjugate of $\Stab{\psi_\rho}{[e,0,0]}=H$. Using Lemma \ref{le:N conj N'}, it follows that
\[\Stab{\psi_\rho}{y}=c_a(H).\]
Using that $y$ is $\psi_\rho$-central, the hypotheses of Lemma \ref{le:A y} are therefore satisfied. Applying this lemma, it follows that $\rho$ and $\quot{\psi_\rho}{y}$ are isomorphic, as claimed.
\end{rmk}
\begin{proof}[Proof of Proposition \ref{prop:inv}] \reff{prop:inv:ex}: Consider first the \textbf{case} in which there exists $\rho\in\SympRepProp{G}$, such that $\psi=\Model{G}(\rho)$. By Remark \ref{rmk:stab} the point $[e,0,0]$ is $\psi$-maximal. Since $\mu_\rho([e,0,0])=0$, this point is also $\psi$-central. This proves the statement in the special case.

The general situation can be reduced to this case, by using Theorem \ref{thm:model}\reff{thm:model:surj} (essential surjectivity), the fact that stabilizers are preserved under equivariant injections, and Remark \ref{rmk:sympl quot Phi}\reff{rmk:sympl quot Phi:Ham}. This proves \reff{prop:inv:ex}.\\

\reff{prop:inv:iso}: Consider first the \textbf{case} in which there exists an isomorphism from $\psi$ to $\psi'$ that maps $x$ to $x'$. Then it follows from Remark \ref{rmk:sympl quot Phi}\reff{rmk:sympl quot Phi:quot} that $\quot{\psi}{x}$ and $\quot{\psi'}{x'}$ are isomorphic.

In the general situation, using Theorem \ref{thm:model}\reff{thm:model:surj} and what we just proved, we may assume \Wlog that $\psi=\psi'=\psi_\rho=\Model{G}(H,\rho)$ for some object $\rho$ of $\SympRep{G}$. By Remark \ref{rmk:rho quot} we have $\quot{\psi_\rho}{x}\iso\rho\iso\quot{\psi_\rho}{x'}$. This proves \reff{prop:inv:iso}.\\

\reff{prop:inv:inv}: Remark \ref{rmk:rho quot} implies that \eqref{eq:inv} is a left-inverse for \eqref{eq:iso class}. Since \eqref{eq:iso class} is surjective, it follows that \eqref{eq:inv} is also a right-inverse. This proves \reff{prop:inv:inv} and completes the proof of Proposition \ref{prop:inv}.
\end{proof}

\section{Acknowledgments}

I would like to thank Yael Karshon for some useful comments and the anonymous referee for valuable suggestions.

\bibliographystyle{amsalpha}
\bibliography{amsj,references}

\providecommand{\MR}[1]{}
\providecommand{\bysame}{\leavevmode\hbox to3em{\hrulefill}\thinspace}
\providecommand{\MR}{\relax\ifhmode\unskip\space\fi MR }
% \MRhref is called by the amsart/book/proc definition of \MR.
\providecommand{\MRhref}[2]{%
  \href{http://www.ams.org/mathscinet-getitem?mr=#1}{#2}
}
\providecommand{\href}[2]{#2}
\begin{thebibliography}{FPP18}

\bibitem[Abo12]{Ab}
Mohammed Abouzaid, \emph{Framed bordism and {L}agrangian embeddings of exotic
  spheres}, Ann. of Math. (2) \textbf{175} (2012), no.~1, 71--185. \MR{2874640}

\bibitem[AM78]{AM}
Ralph Abraham and Jerrold~E. Marsden, \emph{Foundations of mechanics},
  Benjamin/Cummings Publishing Co., Inc., Advanced Book Program, Reading,
  Mass., 1978, Second edition, revised and enlarged, With the assistance of
  Tudor Ra\c tiu and Richard Cushman. \MR{515141}

\bibitem[EKS16]{EKS}
Tobias Ekholm, Thomas Kragh, and Ivan Smith, \emph{Lagrangian exotic spheres},
  J. Topol. Anal. \textbf{8} (2016), no.~3, 375--397. \MR{3509566}

\bibitem[ES16]{ES}
Tobias Ekholm and Ivan Smith, \emph{Exact {L}agrangian immersions with a single
  double point}, J. Amer. Math. Soc. \textbf{29} (2016), no.~1, 1--59.
  \MR{3402693}

\bibitem[FPP18]{FPP}
Alessio Figalli, Joseph Palmer, and \'{A}lvaro Pelayo, \emph{Symplectic
  {$G$}-capacities and integrable systems}, Ann. Sc. Norm. Super. Pisa Cl. Sci.
  (5) \textbf{18} (2018), no.~1, 65--103. \MR{3783784}

\bibitem[Hat02]{Ha}
Allen Hatcher, \emph{Algebraic topology}, Cambridge University Press,
  Cambridge, 2002. \MR{1867354}

\bibitem[KZ18]{KZ}
Yael Karshon and Fabian Ziltener, \emph{Hamiltonian group actions on exact
  symplectic manifolds with proper momentum maps are standard}, Trans. Amer.
  Math. Soc. \textbf{370} (2018), no.~2, 1409--1428. \MR{3729506}

\bibitem[MS17]{MS}
Dusa McDuff and Dietmar Salamon, \emph{Introduction to symplectic topology},
  third ed., Oxford Graduate Texts in Mathematics, Oxford University Press,
  Oxford, 2017. \MR{3674984}

\end{thebibliography}

\section*{List of symbols}
The following list of symbols will hopefully help the reader navigate through this article. The last column shows the page on which the symbol is defined or occurs for the first time.\\

\begin{tabular}{llr}
symbol & meaning or r\^ole & p.\\
&&\\
$\Trans{G}$ & category of transitive $G$-actions on connected closed manifolds & \pageref{Trans G}\\
$\Ad,\Ad^*$ & adjoint, coadjoint $G$-representation & \pageref{Ad Ad*}\\
$c_g$ & conjugation by $g$ & \pageref{c g}\\
$\F{G}$ & functor from $\SYMPREP{G}$ to $\symprep{G}$ & \pageref{F G}\\
$G_x=\Stab{\psi}{x}$ & stabilizer of $x\in X$ under the $G$-action $\psi$ on $X$ & \pageref{G x}\\
$G/$ & functor from $\Sub{G}$ to $\Trans{G}$ & \pageref{G/}\\
$(g,T)$ & pair consisting of $g\in G$ and a linear symplectic map $T:V\to V'$ & \pageref{g T}\\
$\g,\h$ & Lie algebras of $G,H$ & \pageref{g h}\\
$H$ & closed subgroup of $G$ & \pageref{H}\\
$\HamEx{G}$ & category of exact Hamiltonian $G$-actions & \pageref{Ham Ex G}\\%with $M$ connected
$\HamExProp{G}$ & full subcategory of $\HamEx{G}$ consisting of momentum proper objects & \pageref{HamExProp G}\\
$\HamCrit{G}$ & full subcategory of $\HamExProp{G}$ consisting of critical objects & \pageref{HamCrit G}\\
$\HamContr{G}$ & full subcategory of $\HamEx{G}$ consisting of those objects $(M,\om,\psi)$ & \\
& for which $M$ is contractible & \pageref{HamContr G}\\
$\HamContrProp{G}$ & full subcategory of $\HamContr{G}$ consisting of momentum proper objects & \pageref{HamContrProp G}\\
$\II{G}{\rho}$ & canonical isomorphism from $\I{G}(\rho)=\rho$ to $\Model{G}(G,\rho)$ in $\HamEx{G}$ %$V$ to $Y_\rho$
& \pageref{eq:A rho V Y rho}\\
$\III{G}$ & inclusion functor from $\SYMPREP{G}$ to $\SympRep{G}$ & \pageref{III G}\\
$\I{G}$ & inclusion functor from $\symprep{G}$ to $\HamContr{G}$ & \pageref{I G}\\
$\IProp{G}$ & inclusion functor from $\symprepprop{G}$ to $\HamContrProp{G}$ & \pageref{I G}\\
$\iota_{a,\phi}$ & inclusion from $V$ to $T^*G\x V$ induced by $(a,\phi)$ & \pageref{iota a phi}\\
$\IIII{G}$ & inclusion functor from $\Sub{G}$ to $\SympRep{G}$ & \pageref{IIII G}\\
$L_x=L^\psi_x$ & infinitesimal action at $x$ induced by $\psi$ & \pageref{eq:L psi x}\\
$\Model{G}$ & Hamiltonian $G$-model functor & \pageref{Model G}\\
$\big(M,\om,\psi\big)$ & Hamiltonian $G$-action & \pageref{M om psi}\\
$\muL$ & momentum map for the lifted left-translation action of $G$ on $T^*G$ & \pageref{mu L}\\
$\mu_\rho$ & momentum map for $\psi_\rho$ & \pageref{mu rho}\\
$\muD_{H,\rho}=\muD_\rho$ & momentum map for $\psiD_\rho$ & \pageref{mu D rho}\\
$\nu_\rho$ & unique momentum map for $\rho$ that vanishes at $0$ & \pageref{eq:nu rho}\\
$\om_Q$ & canonical symplectic form on $T^*Q$ & \pageref{om Q}\\
$\BAR\om_x$ & linear symplectic form on $V^\psi_x$ induced by $\om_x$ & \pageref{BAR om x}\\
$\pr_1$ & canonical projection from $T^*G\x V$ to $T^*G$ & \pageref{pr1}\\
$\pr_2$ & canonical projection from $T^*G\x V$ to $V$ & \pageref{pr2}\\
$\pr_H$ & canonical projection from $G$ to $G/H$ & \pageref{pr H}\\
$\pi_\rho$ & canonical projection from $(\muD_\rho)^{-1}(0)$ to $Y_\rho$ & \pageref{eq:pi rho}\\
$\psiD_\rho$ & diagonal $H$-action on $T^*G\x V$ induced by right translation on $G$ and $\rho$ & \pageref{psi D rho}\\
$R^g$ & right translation by $g\in G$ & \pageref{R g}\\
$\isotr{\psi}{x}$ & isotropy representation of $\psi$ at $x$ & \pageref{isotr psi x}\\
$\quot{\psi}{x}$ & symplectic quotient representation of $\psi$ at $x$ & \pageref{quot psi x}
\end{tabular}

\begin{tabular}{llr}
symbol & meaning or r\^ole & p.\\
&&\\
$\Stab{\psi}{x}=G_x$ & stabilizer of $x\in X$ under the $G$-action $\psi$ on $X$ & \pageref{G x}\\
$\Sub{G}$ & category of closed subgroups of $G$ & \pageref{Sub G}\\
$\SympRep{G}$ & category of symplectic representations of closed subgroups of $G$ & \pageref{SympRep}\\
$\SympRepProp{G}$ & full subcategory of $\SympRep{G}$ consisting of momentum proper objects & \pageref{SympRepProp G}\\
$\symprep{G}$ & category of symplectic $G$-representations & \pageref{symprep G}\\
$\symprepprop{G}$ & full subcategory of $\symprep{G}$ consisting of momentum proper objects & \pageref{symprepprop G}\\
$\SYMPREP{G}$ & category with objects symplectic $G$-representations and morphisms $(g,T)$ & \pageref{SYMPREP G}\\
$\Tstar{G}$ & $G$-cotangent functor & \pageref{T*G}\\
$(V,\si,\rho)$ & symplectic $H$-representation & \pageref{rho}\\
$V^\psi_x$ & $(\im L_x)^{\om_x}/\big(\im L_x\cap(\im L_x)^{\om_x}\big)$ & \pageref{eq:V psi x}\\
$W^\si$ & $\si$-symplectic complement of $W$ & \pageref{W si}\\
$\big(Y_\rho,\om_\rho,\psi_\rho)$ & Hamiltonian $G$-model associated to $\rho$ & \pageref{Y rho om rho psi rho}
\end{tabular}

\end{document}